\newtheorem{theorem}{Theorem}[section]
\newtheorem{lemma}{Lemma}[section]
\newtheorem{rem}{Remark}[section]
\DeclareMathOperator{\So}{SO}
\DeclareMathOperator{\area}{area}
\DeclareMathOperator{\trace}{trace}
\journal{Computer Methods in Applied Mechanics and Engineering}
\begin{document}

\begin{frontmatter}



\title{A geometric mesh smoothing algorithm 
       related to damped oscillations}

\author{Dimitris Vartziotis}
\address{NIKI Ltd. Digital Engineering, Research Center, 205 Ethnikis Antistasis Street, 45500 Katsika, Ioannina, Greece}
\author{Doris Bohnet}
\address{TWT GmbH Science \& Innovation, Mathematical Research, Ernsthaldenstr. 17, 70565 Stuttgart, Germany}

\begin{abstract}
We introduce a smoothing algorithm for triangle, quadrilateral, tetrahedral and hexahedral meshes whose centerpiece is a simple geometric triangle transformation. The first part focuses on the mathematical properties of the element transformation. In particular, the transformation gives rise directly to a continuous model given by a system of coupled damped oscillations. Derived from this physical model, adaptive parameters are introduced and their benefits presented. The second part discusses the mesh smoothing algorithm based on the element transformation and its numerical performance on example meshes. 
\end{abstract}

\begin{keyword}
geometric mesh smoothing, geometric element transformation, damped oscillation, adaptive parameter

\MSC 68Q25, 37C10,37N30
\end{keyword}

\end{frontmatter}

\pagestyle{myheadings}
\thispagestyle{plain}
\markboth{DIMITRIS VARTZIOTIS AND DORIS BOHNET}{GEOMETRIC MESH SMOOTHING} 




\section{Introduction}\label{s.first}
A modern product development process requires that the parts of the final product are digitally tested in a very early stage of development to study the influence of the novelties on their essential properties and to detect failures betimes. Digital tests are also aimed to replace the expensive testing on test benches. A reliable and fast simulation process is certainly essential for the successful adoption of such a digital product development process. \\
Nearly every simulation in engineering is based on solving partial differential equations with certain numerical solution schemes whose accuracy depends on a good discretization of the object in question. Therefore, every simulation process starts with the preprocessing of the initially given discretization, the so called mesh. This preprocessing should considerably improve the mesh quality where \emph{mesh quality} is a very broad term: a good mesh quality should at least always prevent that the solver fails and guarantee faithful results of the numerical solution schemes (see overview in \cite{Shewchuk2002} and for an error analysis e.g. \cite{BG1986},\cite{BC2010}). One has to stress that the interpretation of mesh quality essentially depends on the physical problem to be simulated (see \cite{F99} for simple examples). Although the separation of mesh quality from the physics is therefore misleading (see e.g. \cite{Du2005} for a counterexample), the most widely used mesh qualities (see overviews in \cite{P1994},\cite{Knupp2001}) are in truth \emph{element} qualities and do not a priori correlate to interpolation errors of the simulation. Usually, they somehow quantify the distortion of each element with regard to a regular element. Nevertheless, their usage can be justified for a wide range of simulations (see \cite[p.5 ff.]{knupp2007}). For this reason, we follow this approach and use the traditional element qualities as a first criterion to assess the mesh smoothing results of our algorithm.\\
In order to improve the mesh quality, one usually relocates nodes (called \textit{smoothing}) and changes the mesh connectivity (called \textit{reconnecting} or modifying the topology). We focus in this article on mesh smoothing techniques where the two main approaches are the \textit{geometric} and \textit{optimization-based approaches}.\\
The geometric approach directly moves the nodes such that the overall mesh quality improves. One classic and widely used example is the Laplacian method where every node is recursively mapped onto the barycenter of its neighboring nodes. While it is usually fast and can be easily run in parallel, it can result in distorted or even inverted mesh elements in non-convex regions so that there exist several, mainly heuristic enhancements of this method (e.g. the isoparametric Laplacian method established by \cite{H76}). In contrast to the geometric, the optimizational approach relies on the computation of the local optimum of an objective function  which expresses the global mesh quality. It is therefore clearly effective as long as the objective function is convex and differentiable. As optimizational schemes one can employ classical methods as the conjugate gradient or newton method in the same way as more recent gradient-free approaches like evolutionary algorithms (see e.g. \cite{YK09}). The main disadvantages of the optimization-based approach are its computational cost and the difficulty to choose an appropriate objective function (see the discussion on mesh quality above and cited references). There exist also combinations of the Laplacian method with an optimizational mesh smoothing in concave regions (see e.g. \cite{Freitag1997}) which try to join the good properties of both.\\

In this article we present a geometric mesh smoothing algorithm for triangle, quadrilateral, tetrahedral and hexahedral meshes which is based on one single simple triangle transformation. Being derived from an element-wise transformation, it is in the spirit of the geometric transformation methods developed in a series of articles (see \cite{VW12}, citations within, and overview in \cite[6.3]{Lo2015}) and mathematically analyzed in \cite{VH14b}. However, the present geometric element transformation was initially motivated by rotational symmetry of triangles and exhibits for this reason distinct mathematical properties like e.g. its provable exponential convergence to a regular triangle and its relation to oscillations which for their part influence its usage as a smoothing algorithm.\\
Endowed with the good computational properties of a geometric method, its effectivity can be mathematically proved for a certain subset of triangle meshes and also established by its correlation to a system of differential equations which model coupled damped springs. Due to this interpretation, one might be reminded of spring-based mesh smoothing methods whose theoretical derivation is however reversed (see discussion in \ref{s.springbased}). The deduction of the algorithm and the presentation of its interesting mathematical properties are the main objective of our article and the topic of Sec.~\ref{s.trans}. But we also give a summary of its numerical smoothing results in Sec.~\ref{s.mesh} which serves as a proof of concept.   

\section{The geometric triangle transformation}\label{s.trans}
We call a triple $\Delta=(x_0,x_1,x_2)$ of non-collinear points in the euclidean plane $\mathbb{R}^2$ or space $\mathbb{R}^3$ a \emph{triangle}. A \emph{triangle transformation} is then a map $\theta: \mathbb{R}^n \rightarrow \mathbb{R}^n$ for $n=(3 \times 2), (3 \times 3)$ which maps a triangle $\Delta$ onto a triangle $\theta(\Delta)$. We call it \emph{geometric} if $\theta$ is compatible with the isometry group actions of the euclidean plane (or space, respectively), that is, $\theta$ commutes with rotations, reflections and translations. Any geometric triangle transformation can be generalized to a triangle mesh transformation by applying it separately to any element of the mesh and then mapping every vertex to the barycenter of its images under the triangle transformations. Another example for a geometric triangle transformation is the transformation which underlies the classical GETMe introduced in \cite{VAGW08}. To guarantee a smoothing effect of such a triangle mesh transformation the geometric triangle transformation must fulfill certain criteria: first of all, $\theta(\theta^{n-1}(\Delta))=\theta^n(\Delta)$ with $n \rightarrow \infty$ should converge to an equilateral triangle (see discussion on element quality above). On the other hand, the convergence should not be too fast because - if applied to a mesh of triangles - this could disturb the global smoothing effect as obviously not any mesh topology admits a mesh of equilateral triangles. The advantage of the geometric element transformation methods is exactly that their basis are transformations which convert step by step any element into a regular one. By this iterative process one can control the rate of the converging of the individual element which is then combined with the averaging step of taking the barycenter. The following observation illustrates this property: If one transforms any element directly into a regular and moves then the nodes onto the barycenter of the vertices of these regular elements, the resulting mesh will probably be not very good and contain inverted elements if the mesh topology does not allow a regular mesh. On the other hand, if one applies a regularizing transformation once or twice to every mesh element, one obtains a better mesh than by applying the classical Laplacian method because every elements are moved towards their best shape. The slow convergence of the element transformation inhibits that the regularizing of each element gets dominant over the constraints of the mesh topology while the individual converging leads to the best possible element quality.\footnote{In fact,  this is not just heuristic but, as on-going research shows, a slightly adapted geometric element transformation method could be seen as a global optimization scheme: one can define an appropriate quality measure which is optimized by the GETMe smoothing algorithm.}\\
In the following we present a geometric triangle transformation which has its offspring in the symmetry group of a triangle: an equilateral triangle is preserved by rotations around its centroid by $\sfrac{2\pi}{3}$ and reflections at its medians (for more on motivational background see \cite{vb16}). One could imitate the rotational group action on an arbitrary triangle by a \emph{pseudo-rotation} of the vertices around the centroid. More precisely, if the triple $(x_0,x_1,x_2)$, $x_i \in \mathbb{R}^2$ for $i\in \mathbb{Z}_3$, defines a planar triangle with centroid $c$, we rotate $x_0-c$ counter-clockwisely around $c$ by the angle $\angle(x_0-c,x_{1}-c)$ until it points into the direction of the vector $x_{1}-c$. We denote this new vertex by $x_{1}^{(1)}$. We proceed analogously with the vertices $x_1$ and $x_2$.  As the inner angles of the triangle face are not necessarily identical, each triangle vertex rotates by a different angle around the barycenter, and consequently, the triangle shape changes and its barycenter $c$ moves. Therefore, we call this transformation of the triangle face a \emph{pseudo-rotation}. If we apply this transformation repeatedly, one observes that the triangle converges to an equilateral one. See Fig.~\ref{fig:triangle_rotated} for an illustration.   
\begin{figure}[htbp]
\includegraphics[width=\textwidth]{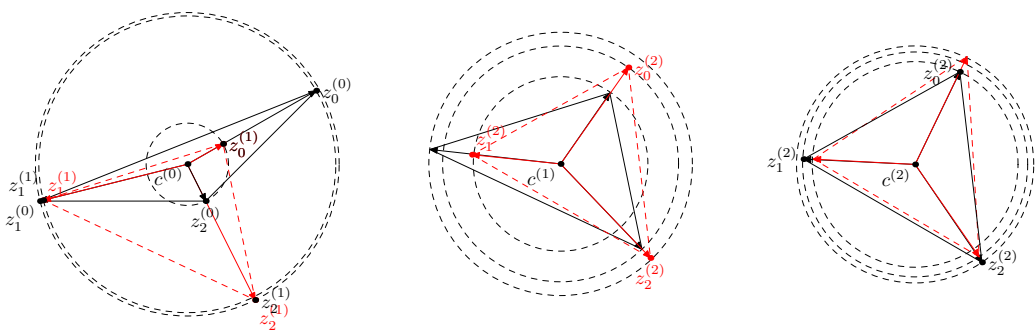}
\caption{The first three iterations of the geometric element transformation represented as rotations: observe how the circle radii approach and the centroid moves to the circumcenter.}
\label{fig:triangle_rotated}
\end{figure}
This simple transformation fulfills the criteria to be a suitable base for a mesh smoothing algorithm, i.e., if applied iteratively, it regularizes an arbitrary triangle, and -- as we see in this article -- has other nice mathematical properties. 
\subsection{Definition of the geometric element transformation}
After these short introductory observations we proceed now with the precise definition of the geometric triangle transformation which is the main subject of this article: 
let $\Delta=(x_0,x_1,x_2) \in (\mathbb{R}^2)^3$ be a planar triangle, indexed in a counter-clockwise fashion, with centroid $c=(\sfrac{1}{3})(x_0+x_1+x_2)$. In the following we denote by $\left\|\;\right\|$ the euclidean metric. Then we define 
\begin{equation}\label{e.firsttransformation}\theta(x_0,x_1,x_2)=\left(x_{0}^{(1)},x_{1}^{(1)},x_{2}^{(1)}\right),\; x_{i}^{(1)}=r_i(x_i-c) + c,\; i=0,1,2,\end{equation}
where $r_i=\left\|x_{i-1}-c\right\|\left\|x_i - c\right\|^{-1}$ for $i \in \mathbb{Z}_3$. Remark that we no longer rotate the triangle nodes as in the informal discussion above where we motivated our transformation. In fact, the vectors $(x_i-c)$ are just rescaled by $r_i$, the ratio of the lengths of $(x_{i-1}-c)$ and $(x_i-c)$. In order to keep the centroid fixed throughout the transformation, we map $x_{i}^{(1)}$ onto $x_{i}^{(1)}-c_{new}+c$, where $i\in\mathbb{Z}_3$ and $c_{new}$ is the centroid of $\theta(\Delta)$. Combining these two transformations, we redefine $\theta$ by  
\begin{align}\label{e.transformation}
\theta(x_0,x_1,x_2) &= \sfrac{1}{3}\begin{pmatrix}2r_{0} & -r_1 & -r_2\\
-r_0 & 2r_1& -r_2 \\
-r_0 & -r_1 & 2r_2\end{pmatrix}\begin{pmatrix}x_{0}-c\\x_1-c\\x_{2}-c\end{pmatrix} + c\\ 
r_i &= \left\|x_{i-1}-c\right\|_2\left\|x_i - c\right\|^{-1},\; i \in \mathbb{Z}_3.\nonumber
\end{align}
\begin{rem}
Let $\Delta=(x_0,x_1,x_2) \in (\mathbb{R}^3)^3$ be a triangle and $E$ the plane spanned by the vectors $(x_1-x_0),(x_2-x_0)$. Then one easily computes that the image $\theta(\Delta)$ lies in $E$, too. Therefore, if one considers a single triangle, it suffices to study a planar triangle.
\end{rem} 
\subsection{Mathematical properties of the geometric element transformation}\label{sec:math}
Let $X$ be the set of non-collinear triples $(x_0,x_1,x_2) \in (\mathbb{R}^2)^3$ which define planar triangles. The geometric triangle transformation $\theta:X \rightarrow X$ is obviously well-defined, i.e. it maps a triangle onto a triangle. Further, one easily observes that $\theta$ commutes with the isometry group action on the plane, that is, it does not matter if one first rotates, reflects or translates a triangle and then applies $\theta$ or conversely, first applies $\theta$. This property is desirable for triangle mesh transformations as the transformation should not depend on the position of the mesh in space. Additionally, $\theta$ is invariant under rescaling of a triangle. Summarizing, one could state the following property:
\begin{lemma}\label{l:similar}
Let $\Delta$ and $\Delta'$ be two similar triangles defined by triples $(x_0,x_1,x_2)$ and $(y_0,y_1,y_2) \in (\mathbb{R}^2)^3$, i.e. there exist $A \in \So(2,\mathbb{R})$, $\sigma \in \mathbb{R}^2$ and $a \in \mathbb{R}^+$ such that $a(Ax_i + \sigma)=y_i$ for $i=0,1,2$. Then the images $\theta(\Delta)$ and $\theta(\Delta')$ are also similar two each other, in particular, it holds that $a(Ax_{i}^{(1)} + \sigma)=y_{i}^{(1)}$ for $i=0,1,2$. 
\end{lemma}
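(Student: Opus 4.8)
The plan is to verify directly that $\theta$ intertwines the similarity map $T(x) := a(Ax + \sigma)$, i.e.\ that $T\circ\theta = \theta\circ T$ on each vertex, by tracking how every ingredient of the defining formula \eqref{e.transformation} transforms under $T$. Since $y_i = T(x_i)$, I would begin with the centroid: averaging $y_i = a(Ax_i+\sigma)$ over $i$ and using linearity of $A$ shows that the centroid $c'$ of $\Delta'$ equals $T(c) = a(Ac+\sigma)$. Subtracting this from $y_i$ then yields the clean relation $y_i - c' = aA(x_i-c)$, which is the single identity the rest of the argument rests on.

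The key step is to observe that the ratios $r_i$ are unchanged. Applying $\|\cdot\|_2$ to $y_i - c' = aA(x_i-c)$ and using that $A \in \So(2,\mathbb{R})$ is an isometry gives $\|y_i - c'\|_2 = a\|x_i-c\|_2$; hence in the defining quotient $r_i' = \|y_{i-1}-c'\|_2\,\|y_i-c'\|_2^{-1}$ the positive scalar $a$ cancels and $r_i' = r_i$ for every $i \in \mathbb{Z}_3$. This cancellation is exactly the scale-invariance that makes the statement work, and it is the only place where $a$ plays a role.

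It then remains to substitute into \eqref{e.transformation}. Writing the formula for $y_i^{(1)}$ in terms of the primed data, replacing each $r_j'$ by $r_j$ and each $y_j - c'$ by $aA(x_j-c)$, and factoring the linear map $aA$ out of the linear combination of the vectors $x_j-c$, I would recognize the bracketed expression as precisely $x_i^{(1)} - c$. This gives $y_i^{(1)} = aA(x_i^{(1)}-c) + c'$, and inserting $c' = a(Ac+\sigma)$ collapses the $aAc$ terms and leaves $y_i^{(1)} = a(Ax_i^{(1)} + \sigma)$, which is the claim.

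I do not expect a genuine obstacle here: the computation is routine once the relation $y_i - c' = aA(x_i-c)$ is in place. The only points demanding care are the bookkeeping of the cyclic indices in $\mathbb{Z}_3$ and the two distinct uses of the hypotheses, namely orthogonality of $A$ for the norm-preservation behind $r_i' = r_i$, and linearity of $A$ together with $c' = T(c)$ for pulling $T$ through the barycentric combination. Note finally that $\So(2,\mathbb{R})$ preserves orientation, so the counter-clockwise indexing, and hence the role of $x_{i-1}$ versus $x_{i+1}$ in the definition of $r_i$, is respected.
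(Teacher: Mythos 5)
Your proposal is correct and is exactly the ``simple computation'' the paper alludes to: it uses the isometry property of $A \in \So(2,\mathbb{R})$ and the absolute homogeneity of the norm to get $r_i' = r_i$, then pushes the affine map through the barycentric combination. The paper does not write out the details, but your argument supplies precisely the intended ones.
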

The proof of this lemma is a simple computation, using the fact that rotations and translations are isometries and that any norm is absolutely homogeneous (see \ref{ap:proof}). We use Lemma~\ref{l:similar} to redefine $\theta$ by 
\begin{equation}\label{eq:rescaling}
\theta(\Delta)=\sqrt{\area(\theta(\Delta))^{-1}\area(\Delta)}\theta(\Delta)
\end{equation}
to keep the area of the triangle constant.\\
Further and more importantly, one can prove that $\theta$ given by Eq.~\ref{e.transformation} regularizes any triangle:
\begin{theorem}\label{t.convergence}
Let $\Delta \in X$ be given. Then $\theta^n(\Delta)$ for $n \rightarrow \infty$ converges to an equilateral triangle.  
\end{theorem}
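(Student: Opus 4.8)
By the Remark it suffices to treat a planar triangle, so I identify $\mathbb{R}^2$ with $\mathbb{C}$, place the centroid at the origin and write $z_i=x_i-c\in\mathbb{C}$, so that $z_0+z_1+z_2=0$. The plan is to pass to the discrete Fourier (shape) coordinates of the triangle and show that $\theta$ induces a contraction towards the equilateral shape. Setting $\omega=e^{2\pi i/3}$, I introduce $\hat z=z_0+\omega z_1+\omega^2 z_2$ and $\check z=z_0+\omega^2 z_1+\omega z_2$, together with the similarity invariant $\zeta=\hat z/\check z$. A direct check shows $\zeta=0$ exactly for a counter-clockwise equilateral triangle, while the signed area is proportional to $|\check z|^2-|\hat z|^2=|\check z|^2(1-|\zeta|^2)$; hence the counter-clockwise non-degenerate triples of $X$ are precisely those with $|\zeta|<1$, the collinear triples sitting on $|\zeta|=1$. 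Note also that the area normalization (\ref{eq:rescaling}) keeps $|\check z|^2-|\hat z|^2$ constant along the iteration, and that by Lemma~\ref{l:similar} the whole dynamics descends to a well-defined map $T$ on the shape coordinate $\zeta$.

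Next I would compute $T$ explicitly. Inverting the transform gives $z_i=\tfrac13(\omega^{-i}\hat z+\omega^i\check z)$, and since the recentering term drops out of $\hat z^{(1)},\check z^{(1)}$ (because $\sum_i\omega^i=0$), substituting (\ref{e.transformation}) yields the linear action
\begin{equation*}
\hat z^{(1)}=\tfrac13\left(P\hat z+Q\check z\right),\qquad \check z^{(1)}=\tfrac13\left(\overline{Q}\,\hat z+P\check z\right),
\end{equation*}
where, writing $w_i=1+\omega^i\zeta$ and $r_i=|w_{i-1}|/|w_i|$, one has the real number $P=\sum_i r_i$ and $Q=\sum_i\omega^{-i}r_i$. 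Dividing, the induced map is the M\"obius transformation $T(\zeta)=(P\zeta+Q)/(\overline{Q}\zeta+P)$, whose coefficients depend on $\zeta$ through the $r_i$. Two elementary observations organize everything: the triangle inequality gives $|Q|\le\sum_i r_i=P$, strictly (the $\omega^{-i}r_i$ cannot be positively aligned), so $\bigl(\begin{smallmatrix}P&Q\\ \overline{Q}&P\end{smallmatrix}\bigr)$ lies, up to a positive scalar, in $SU(1,1)$ and $T$ is a hyperbolic automorphism of the unit disk $\mathbb{D}$; in particular $\{|\zeta|<1\}$ is invariant, re-proving that $\theta\colon X\to X$ is well defined. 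Moreover $\prod_i r_i=1$, so AM--GM gives $P\ge 3$ with equality iff $\zeta=0$.

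With $T$ in hand, convergence splits into a local and a global part. Relabelling the vertices cyclically sends $\zeta\mapsto\omega\zeta$, so $T$ commutes with multiplication by $\omega$; consequently its derivative at the fixed point $\zeta=0$ (where $P=3$, $Q=0$) is complex-linear, and a short computation shows it equals multiplication by a constant of modulus $\tfrac12$, so the equilateral shape is attracting with rate $\tfrac12$. For the global statement I would use $|\zeta|$ itself as a Lyapunov function: a routine manipulation gives
\begin{equation*}
|T(\zeta)|^2-|\zeta|^2=\frac{1-|\zeta|^2}{|\overline{Q}\zeta+P|^2}\,N(\zeta),\qquad N(\zeta)=|P\zeta+Q|^2-\left(P^2-|Q|^2\right)|\zeta|^2,
\end{equation*}
so it remains to prove $N(\zeta)<0$ for all $0<|\zeta|<1$. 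This is the crux and the main obstacle. To make it tractable I would again invoke the order-three symmetry $T(\omega\zeta)=\omega T(\zeta)$, which reduces the verification to the fundamental sector $\{\,0\le\arg\zeta<2\pi/3\,\}$, and there insert the explicit expressions $|w_i|^2=1+|\zeta|^2+2\operatorname{Re}(\omega^i\zeta)$ to reduce $N<0$ to an elementary trigonometric inequality in $(|\zeta|,\arg\zeta)$.

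Granting $N<0$, the sequence $|\zeta_n|$ is strictly decreasing and bounded below, and a compactness argument on each annulus $\{\,0<\ell\le|\zeta|\le|\zeta_0|\,\}$, where $N$ is bounded away from $0$, forces $|\zeta_n|\to0$; thus the shape converges to equilateral. Finally I would upgrade shape convergence to convergence of the triangle itself: the centroid and the area stay fixed, and since $|\zeta_n|\to0$ geometrically, the relation $\check z^{(1)}=\tfrac13(\overline{Q}\hat z+P\check z)$ shows the phase increments of $\check z_n$ are $O(|\zeta_n|^2)$ and hence summable, so $\check z_n$ and with it each vertex converges, the limit being an equilateral triangle of the prescribed area. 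Once the Fourier coordinates and the disk-automorphism structure are in place, everything except the global inequality $N<0$ is bookkeeping.
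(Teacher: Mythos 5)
Your setup is sound and genuinely different from the paper's argument: I checked the Fourier/shape-coordinate bookkeeping, and the formulas $\hat z^{(1)}=\tfrac13(P\hat z+Q\check z)$, $\check z^{(1)}=\tfrac13(\overline{Q}\hat z+P\check z)$ with $P=\sum_i r_i$, $Q=\sum_i\omega^{-i}r_i$, the identity $N(\zeta)=|Q|^2(1+|\zeta|^2)+2P\operatorname{Re}(\overline{Q}\zeta)$, the strict bound $|Q|<P$, and $P\ge 3$ with equality iff $\zeta=0$ are all correct. (The paper instead works directly with the radii $R_i=\|x_i-c\|$, which in your notation are $\tfrac13|\check z|\,|w_i|$, and squeezes the ratios $r_i$ between the monotone sequences $\min_iR_i/\max_iR_i$ and its reciprocal.) However, what you have written is a proof plan with its central step missing: the inequality $N(\zeta)<0$ for all $0<|\zeta|<1$ is exactly the statement that $|\zeta|$ is a strict Lyapunov function, i.e.\ it carries the entire content of global convergence, and you explicitly defer it ("the crux and the main obstacle") to an unspecified "elementary trigonometric inequality." Since $P$, $Q$ depend on $\zeta$ through the three moduli $|1+\omega^i\zeta|$, the quantity $N$ is a genuinely nonlinear function of $(|\zeta|,\arg\zeta)$ involving square roots and ratios of these moduli; it is not obvious that the reduction you sketch terminates in something checkable by hand, and no competing mechanism (convexity, a factorization of $N$, an identity exploiting $\prod_i r_i=1$) is offered. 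Spot checks (e.g.\ $\zeta=0.9$, $\zeta=0.9i$, $\zeta=\tfrac12 e^{i\pi/3}$) do give $N<0$, so the claim is plausible, but as it stands the theorem is not proved.

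Two smaller points. First, calling $T$ a "hyperbolic automorphism of $\mathbb{D}$" is only true frozen at a fixed $\zeta$; a hyperbolic automorphism does not contract toward an interior point, so this observation buys you invariance of the disk but nothing toward convergence --- all the work is pushed onto $N<0$, which is fine as long as you are aware the $SU(1,1)$ structure is not doing the contracting. Second, the final upgrade from shape convergence to convergence of the vertices is essentially right but needs the geometric decay of $|\zeta_n|$, which you should extract explicitly from the local rate $\tfrac12$ at $\zeta=0$ (once $\zeta_n\to0$, continuity gives $|\zeta_{n+1}|\le\lambda|\zeta_n|$ for some $\lambda<1$ and all large $n$); only then are the phase increments $O(|\zeta_n|^2)$ summable. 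Note that the paper's own proof only claims convergence of the ratios $r_i^{(n)}\to1$ and does not address convergence of the vertices themselves (that is deferred to Theorem~\ref{t:attractor}), so your last step, once the gap is filled, would actually prove slightly more than the statement as proved in the paper.
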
 
\begin{proof}
Let $\Delta=(x_0^{(0)},x_1^{(0)},x_2^{(0)})$ be an arbitrary triangle. Denote its iterates by $\theta^n(\Delta)=(x_0^{(n)}, x_1^{(n)},x_2^{(n)})$. The proof is based on the fact that the triangle $\Delta$is equilateral if and only if the distances $\left\|x_i^{(0)}-c\right\|, i=0,1,2$, from the vertices to the centroid are all equal. Recall that the centroid of an arbitrary triangle does not generally coincide with its circumcenter. Proving that the distances to the centroid are equal is equivalent to the fact that the centroid coincides with the circumcenter. Let $r^{(n)}_i$ be the fraction $\left\|x_{i-1}^{(n)}-c\right\|\left\|x_i^{(n)}-c\right\|^{-1}$. One looks at the sequence $\left(r^{(n)}_i\right)_{n\geq 0}$ for $i=0,1,2$. One shows by computation the following:
\begin{enumerate}\label{enum:properties}
\item for any $n\geq 0$, one has $\max_{i=0,1,2}^2\left\|x_i^{(n+1)}-c\right\|< \max_{i=0,1,2}\left\|x_i^{(n)}-c\right\|$, and
\item for any $n \geq 0$, $\min_{i=0,1,2}\left\|x_i^{(n)}-c\right\| < \min_{i=0,1,2}\left\|x_i^{(n+1)}-c\right\|$. 
\end{enumerate}
Further, we define the following upper and minor bounds for $r^{(n)}_i$ for any $n \geq 0$ and $i=0,1,2$ by 
\begin{align}
a_n:=\min_{i=0,1,2}\left\|x_i^{(n)}-c\right\|\left(\max_{i=0,1,2}\left\|x_i^{(n)}-c\right\|\right)^{-1} &\leq r_i^{(n)}\label{inequality_1}\\
b_n:=\max_{i=0,1,2}\left\|x_i^{(n)}-c\right\|\left(\min_{i=0,1,2}\left\|x_i^{(n)}-c\right\|\right)^{-1}&\geq r_i^{(n)}\label{inequality_2}.
\end{align}
The inequalities~\ref{inequality_1} and \ref{inequality_2} imply that $a_n \leq 1$ is a strictly increasing and $b_n \geq 1$ a strictly decreasing sequence, both bounded by $1$. Consequently, we have $\lim a_n = \lim b_n =1$. With the basic Squeeze Theorem, we conclude that $\lim r_i^{(n)} =1$ finishing the proof.
\end{proof}   
Further, one could prove the following stronger result: 
\begin{theorem}\label{t:attractor}
Let $\Delta_{eq} \in X$ be an equilateral triangle and $\Lambda$ the set of triangles similar to it. Then $\Lambda$ is a global attractor for $\theta$ given by Eq.~\ref{e.transformation}. In particular, for any planar triangle $\Delta$ the sequence $\theta^n(\Delta)$ converges uniformly at exponential rate to one point in $\Lambda$ for $n\rightarrow \infty$. 
\end{theorem}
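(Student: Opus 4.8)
The plan is to pass to the quotient by similarities, where the whole set $\Lambda$ collapses to a single attracting fixed point of $\theta$, to establish the attraction by an explicit linearization, and then to lift the resulting shape convergence back to genuine convergence of the vertices. First I would use Lemma~\ref{l:similar} together with the area normalization \eqref{eq:rescaling} to reduce to triangles with centroid at the origin and fixed area; on this slice $\theta$ still acts, and every equilateral triangle is a \emph{genuine} fixed point, since $r_i=1$ for all $i$ forces $x_i^{(1)}=x_i$ in \eqref{e.transformation}. Writing $u_j:=x_j-c\in\mathbb{C}$ with $\sum_j u_j=0$, I would introduce the discrete Fourier coordinates $(\alpha,\beta)$ determined by $u_j=\alpha\omega^j+\beta\omega^{-j}$ with $\omega=e^{2\pi i/3}$. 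In these coordinates the counter-clockwise equilateral triangles are exactly $\{\beta=0\}$ (that is, $\Lambda$), the shape is measured by the similarity invariant $\epsilon:=\beta/\alpha$ (with $|\epsilon|<1$ for a positively oriented non-degenerate triangle), and orientation-preserving similarities act by the common scaling $(\alpha,\beta)\mapsto\lambda(\alpha,\beta)$, so that $\epsilon$ is precisely the coordinate on the quotient shape space.

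Next I would rewrite $\theta$ in these coordinates. Since $u_j^{(1)}=r_j u_j-\tfrac13\sum_k r_k u_k$ with $r_j=|u_{j-1}|/|u_j|$, the map is the componentwise product by $(r_j)$ followed by removal of the mean; in Fourier space this is a convolution with $\hat r$ coupling $\alpha$ and $\beta$, while the area normalization rescales $(\alpha,\beta)$ jointly and hence leaves $\epsilon$ unchanged. The heart of the argument is to linearize the induced shape map $\epsilon\mapsto\epsilon^{(1)}$ at $\epsilon=0$. Expanding $r_j=1+\operatorname{Re}\!\big(\epsilon\,\omega^{j-1}(1-\omega)\big)+O(|\epsilon|^2)$ and summing over $j\in\mathbb{Z}_3$, I expect the anti-holomorphic ($\bar\epsilon$) contributions to cancel and to obtain $\epsilon^{(1)}=-\tfrac{\omega}{2}\,\epsilon+O(|\epsilon|^2)$ together with $\alpha^{(1)}=\alpha\,(1+O(|\epsilon|^2))$. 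Thus the linearization of the shape map is multiplication by $-\omega/2$, a contraction of ratio $\tfrac12<1$.

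Finally I would assemble the pieces. Theorem~\ref{t.convergence} already yields $r_i^{(n)}\to1$ for every initial triangle, which (using the centroid constraint, three equal radii forcing equal angular spacing) is equivalent to $\epsilon_n\to0$; hence every orbit eventually enters the neighborhood where the contraction estimate $|\epsilon_{n+1}|\le\lambda\,|\epsilon_n|$ holds for a fixed $\lambda\in(\tfrac12,1)$, giving the global bound $|\epsilon_n|\le C\lambda^n$ and proving that $\Lambda$ is a global attractor. To upgrade this to convergence to a \emph{single} point I would use that the per-step drift of phase and size is $\alpha_{n+1}=\alpha_n(1+O(|\epsilon_n|^2))$, so that $\sum_n|\alpha_{n+1}-\alpha_n|<\infty$; therefore $\alpha_n\to\alpha_\infty\neq0$ while $\beta_n\to0$, and $u_j^{(n)}\to\alpha_\infty\omega^j$, a single equilateral triangle of $\Lambda$, at exponential rate.

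The main obstacle is the linearization computation: the factors $r_j=|u_{j-1}|/|u_j|$ are only real-analytic, not holomorphic, so a priori the linear part of the shape map could carry a $\bar\epsilon$ term, and one must check through the discrete Fourier identities that these terms cancel and that the contraction ratio is exactly $1/2$. A secondary, more routine difficulty is the lifting step — turning \emph{``shape contracts exponentially''} into \emph{``the orbit has an honest limit in $\Lambda$''} — which is the summability argument above and is the concrete manifestation of the normal hyperbolicity of the fixed-point manifold $\Lambda$.
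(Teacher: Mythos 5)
Your proposal takes essentially the same route as the paper's (sketched) proof: linearize $\theta$ at the manifold $\Lambda$ of equilateral triangles, show the spectrum transverse to $T\Lambda$ is strictly contracting, and invoke Theorem~\ref{t.convergence} to make the attraction global. Your discrete-Fourier computation is correct and in fact supplies the two details the paper omits --- the $\bar\epsilon$ contribution is a constant ($j$-independent) vector and is therefore killed exactly by the mean-removal, leaving the holomorphic multiplier $\epsilon\mapsto\frac{1+\omega}{2\omega}\epsilon=-\frac{\omega}{2}\epsilon$ of modulus $\frac12$ (the two contracting eigenvalues complementing the four unit eigenvalues along $T\Lambda$), and the summability of $|\alpha_{n+1}-\alpha_n|$ is what upgrades ``$\Lambda$ attracts'' to convergence to a single limit triangle.
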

\begin{proof}[Sketch]
The proof of this theorem relies on an analysis of the dynamics of the map $\theta$ which leads to the observation that the eigenvalues of the Jacobian matrix of $\theta$ at an equilateral triangle are solely responsible for the dynamical properties of the map. More precisely, it suffices to prove that the absolute values of all eigenvalues are strictly smaller than one apart from four eigenvalues equal to one which correspond to the four-dimensional tangent space of $\Lambda$. Using Theorem~\ref{t.convergence} this allows then to conclude that $\Lambda$ is a global attractor. A \emph{global attractor} of a discrete dynamical system as generated by the transformation $\theta:X \rightarrow X$ is a compact set $\mathcal{A}\subset X$ such that  $\bigcap_{n \geq 0} \theta^n(X) = \mathcal{A}$, and there is no subset of $\mathcal{A}$ with this properties. That means, every orbit $\theta^n(\Delta)$ for any $\Delta \in X$ eventually converges to the attractor. 
\end{proof}
In \cite{VB2016} we give a detailed mathematical analysis of a geometric tetrahedron transformation which could complement the present section Sec.~\ref{sec:math}.
\begin{rem}
The transformation is not ad hoc generalizable to any $n$-gon, $n \geq 4$: for example, one easily notices that a quadrilateral converges under the transformation above, applied in a strictly analogous fashion, not necessarily to a square, but to a rectangle where the distances from its vertices to its centroid are pairwise equal. Non-convex quadrilaterals pose even more problems. Nevertheless, there is a way to regularize quadrilaterals by defining appropriate triangles inside and then applying the triangle transformation to these inner triangles (see Sec.~\ref{s.meshtransformation}).
\end{rem}
\subsection{Remarks on the generalization to triangle meshes}\label{s:generalization}
The geometric triangle transformation $\theta$ can be utilized to define a \emph{geometric triangle mesh transformation} $\Theta$ in the following way: let $x$ be a node with adjacent triangles $\Delta_0,\dots,\Delta_{k(x)}$, and denote the image of $x$ under the triangle transformations $\theta_i$ of $\Delta_i$ by $x^{i}_{new}$, then we set \begin{equation}\label{eq:Theta}\Theta(x):=\sfrac{1}{k(x)}\sum_{i=0}^{k(x)}x_{new}^i\end{equation} onto the barycenter of its images. 
\begin{rem}
With this definition, one could generalize Theorem~\ref{t:attractor} above to a certain compact subset of planar triangle meshes asserting the effectiveness of the triangle mesh transformation $\Theta$.
\end{rem}
Let $l_i = \left\|x_i - x_{i-1}\right\|, \;i \in \mathbb{Z}_3$ denote the edge lengths of a triangle $\Delta$. We call \emph{distortion} $q(\Delta)$ of a triangle $\Delta$ the ratio of its shortest and longest edge length, that is $q(\Delta)=\min_{i=0,1,2}(l_i)\max_{i=0,1,2}(l_i)^{-1}$ and \emph{orientation} the sign of its normal vector $(x_1-x_0) \times (x_2-x_0)$. 
\begin{figure}[htbp]
\centering
\begin{minipage}{0.47\textwidth}
\centering
\includegraphics[width=0.6\textwidth]{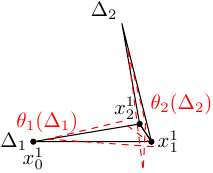}
\end{minipage}
\begin{minipage}{0.47\textwidth}
\centering
\includegraphics[width=0.6\textwidth]{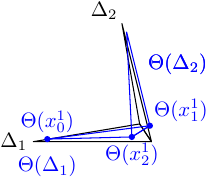}
\end{minipage}
\caption{Triangle flip: mesh of two triangles $\Delta_1,\Delta_2$ where the mesh transformation $\Theta$ reverses the orientation of one of the triangles.}
\label{fig:flips}
\end{figure} 
One notices that the transformation $\Theta$ defined by \ref{eq:Theta} is not orientation-preserving on the whole set of planar triangle meshes: for example, if two distorted triangles share one edge, the orientation of one of the triangles could be reversed  (see Fig.~\ref{fig:flips}). For that reason, one defines a compact subset of meshes whose element distortion is bounded from below and proves that the mesh transformation $\Theta$ is orientation-preserving on this subset. However, the defined bound is not sharp and difficult to calculate precisely as it is not the distortion of a single triangle which counts (in fact, the triangle transformation regularizes any single triangle), but the combination of distortions of at least two triangles and their specific position to each other. More precisely, let $x_0,x_1,x_2,x_3$ define a mesh of two triangles $\Delta_0,\Delta_1$ with centroids $c_0,c_1$ as in Fig.~\ref{fig:flips}. Assume for simplicity that $\left\|x_1-c_0\right\|=\left\|x_2-c_0\right\|<\left\|x_0-c_0\right\|$ and $\left\|x_1-c_1\right\|=\left\|x_2-c_1\right\| < \left\|x_3-c_1\right\|$. Therefore, the distortions are $q(\Delta_0)=\left\|x_1-c_0\right\|\left\|x_1-c_1\right\|^{-1}$ and $q(\Delta_1)=\left\|x_1-c_1\right\|\left\|x_3-c_1\right\|^{-1}$. One computes then 
\begin{align*}
\Theta(x_1)&=\sfrac{1}{2}(\theta_0(x_1) + \theta_1(x_1))\\
&=\sfrac{1}{2}(q(\Delta_0)^{-1}(x_1-c_0)+c_0+x_1)
\end{align*}
In the same way, one obtains $\Theta(x_2)=\sfrac{1}{2}(x_2 + q(\Delta_1)^{-1}(x_2-c_1)+c_1)$ and $\Theta(x_0)=q(\Delta_0)(x_0-c_0)+c_0$. The vector $x_2$ is therefore principally moved into the direction of $(x_2-c_1)$ scaled by $q(\Delta_2)^{-1}$ while $x_1$ is pushed into the direction of $(x_1-c_0)$ scaled by $q(\Delta_0)^{-1}$. Whether the sign of $(\Theta(x_1)-\Theta(x_0))\times(\Theta(x_2)-\Theta(x_0))$ gets reversed in comparison to the sign of $(x_1-x_0)\times(x_2-x_0)$ depends on the angle of these two vectors to each other and on the distortions $q(\Delta_0),q(\Delta_1)$. In a usual mesh, where one vertex has usually more than two adjacent triangles, the dependencies of distortions and angles become complex as actually the distortion of any triangle element influences the movement of one vertex $x$, but with decreasing weight for increasing distance to $x$. Nevertheless, if one considers a mesh which can be smoothed to a mesh of equilateral triangles one can deduce -- essentially from the continuity of the mesh transformation $\Theta$ -- that there exists a bound $0 < \delta$ such that every mesh whose elements have a distortion $> \delta$ converges to the equilateral mesh.  But as this bound cannot be in general exactly computed, we either use in Sec.~\ref{s.mesh} adaptive parameters or an explicit orientation check to prevent the creation of invalid elements under the mesh transformation. \\
Back to the mathematical properties, one has to acknowledge that the necessary computations to assure the effectiveness of the triangle mesh transformation involve the analysis of huge Jacobian matrices, and the limited knowledge of the exact nature of these matrices restricts the cases where we can really prove the global convergence of this mesh transformation. For that reason, we do not further explore these quite technical mathematical aspects here, but we would rather like to describe how to correlate this discrete transformation to a system of linear differential equations. This is the objective of the following section:     
\subsection{Model of a system of damped oscillations}\label{sec:model}
Looking for a mathematically simpler, but adequate model to handle the transformation $\theta$, we describe the dynamics of $\theta$ by differential equations and try to bypass the computational difficulties in this way. The following observations together with Sec.~\ref{sec:math} provide also an evidence for the numerical results in Sec.~\ref{s.mesh}.
\subsubsection{Derivation and solution of a system of differential equations}
One observes that the quantities, responsible for the convergence properties of $\theta$, are the distances $R_i=\left\|x_i-c\right\|$ of the vertices $x_i$, $i=0,1,2$, to the centroid $c$. Consequently, we consider these as time-dependent variables $R_i(t)$ and describe their dynamics by differential equations. First of all, the fixed points of $\theta$ are exactly the equilateral triangles, so we have to assure that $\dot{R}_i(t)=0$ for $i=0,1,2$ if and only if the distances $R_0(t)=R_1(t)=R_2(t)$ are equal and therefore constant. Further, we observe that the distance $R_0(t)$ increases if $R_2(t)$ is greater than the average distance and decreases otherwise, so we can set 
$$\dot{R}_0(t)=R_2(t) - (\sfrac{1}{3})(R_0(t)+R_1(t)+R_2(t)) = (\sfrac{2}{3})R_2(t) -(\sfrac{1}{3})(R_0(t) + R_1(t)).$$
In this way, we get for the distance vector $R(t)=(R_0(t),R_1(t),R_2(t))$ the following system of linear differential equations: 
\begin{equation}\label{e.system_ode}\begin{pmatrix}\dot{R_0}(t)\\\dot{R_1}(t)\\\dot{R_2}(t)\end{pmatrix}=\begin{pmatrix}-\sfrac{1}{3}& - \sfrac{1}{3} & \sfrac{2}{3}\\\sfrac{2}{3} & -\sfrac{1}{3} & -\sfrac{1}{3}\\-\sfrac{1}{3}& \sfrac{2}{3} & -\sfrac{1}{3}\end{pmatrix}\begin{pmatrix}R_0(t)\\R_1(t)\\R_2(t)\end{pmatrix}\quad\mbox{with} \begin{pmatrix}R_0(0)\\R_1(0)\\R_2(0)\end{pmatrix}=\begin{pmatrix}R_0\\R_1\\R_2\end{pmatrix}.\end{equation}  
One easily computes that this system has as stationary solutions the constant vectors $(a,a,a) \in \mathbb{R}^3, a \in \mathbb{R}^+$ which correspond to the equilateral triangles. 
\begin{rem}
We would like to stress that the presented system of differential equations is a model for the geometric transformation. It is not the case that the transformation is the discretization of the continuous system, e.g. derived from the Euler method. 
\end{rem}
Let us now shortly discuss the dynamics of this system of differential equations: 
\subsubsection{Dynamics of the continuous solutions}
Solving a system of linear differential equations is a standard technique: we compute the eigenvalues of the coefficient matrix as $\lambda_0=0$ and a pair of complex conjugate eigenvalues $\lambda_{1,2}=-\sfrac{1}{2} \pm \sfrac{\sqrt{3}i}{2}$ with corresponding eigenvectors $v_0=(1,1,1)$, and $v_{1,2}=(-\sfrac{1}{2} \pm \sfrac{\sqrt{3}i}{2}, 1, -\sfrac{1}{2} \mp \sfrac{\sqrt{3}i}{2})$. Denoting the initial values by $(R_0,R_1, R_2)$ we obtain as general solution for $R_i(t)$, $i\in\mathbb{Z}_3$,
\begin{equation}\label{e.solution_ode}
R_i(t)=(\sfrac{1}{3})(R_0+R_1+R_2) + (\sfrac{1}{3})\exp(-\sfrac{t}{2})(c_i\cos(\sfrac{\sqrt{3}t}{2}) + s_i\sin(\sfrac{\sqrt{3}t}{2})),
\end{equation}
using the abbreviations $c_i=2R_i - R_{i+1}-R_{i+2}$ and $s_i=\sqrt{3}(R_{i+2}-R_{i+1})$.\\
As the distances between vertices and centroid for any triangle are positive, we get a half line $\left\{(a,a,a) \;\big|\; a\in \mathbb{R}^+\right\}$ of constant solutions as equilibria which correspond to the equilateral triangles. Set $\overline{R}=(\sfrac{1}{3})(R_0+R_1+R_2)$ as the average distance of the initial triangle. As cosine and sine are bounded by $1$, we have for $i=0,1,2$
$$\left|R_i(t) - \overline{R}\right| \leq (\sfrac{1}{3})(\left|c_i\right|+\left|s_i\right|)\exp(-\sfrac{t}{2}) \longrightarrow 0\quad\mbox{for}\; t \rightarrow +\infty.$$
Consequently, every solution $R(t)$ converges at exponential rate to the constant solution $(\overline{R},\overline{R},\overline{R})$. The dynamics could be easily deduced from Fig.~\ref{fig:comparison} (left). 
One could fill in the solution $R(t)$, replacing $R_0, R_1, R_2$, into the transformation $\theta$ given by Equation~\ref{e.transformation} to convert it into a continuous process by $x_i(t)$. This allows the observation that every vertex spirals slowly around the corresponding vertex of the limit triangle while approaching it. If one compares the limit triangle of this continuous process with the one obtained by $\theta$ (e.g. in Fig.~\ref{fig:triangle_rotated}), one observes that they coincide which affirms that our model is adequate. 
\subsubsection{Comparison between continuous solution and discrete transformation}
For further validation of our continuous model we compare the results by numerical computations: let $\Delta$ be a planar triangle with distances $R_0,R_1,R_2$ from its centroid to its vertices. Choosing an appropriate discretization for $t$, we plot the continuous curves $R(t)$ given by the solution of Equation~(\ref{e.system_ode}) against the discrete iterates $R_i^n$ for $n \in \mathbb{N}$ which denote the distances of the $n$th iterate $\theta^n(\Delta)$ of the initial triangle $\Delta$. Fig.~\ref{fig:comparison} shows the accuracy of the continuous description for the discrete transformation and the rapid convergence of the transformation to a regular triangle. \\
\vspace{0.1cm}
\begin{figure}[htbp]
\centering
\begin{minipage}[t]{0.48\textwidth}
\includegraphics[width=\textwidth]{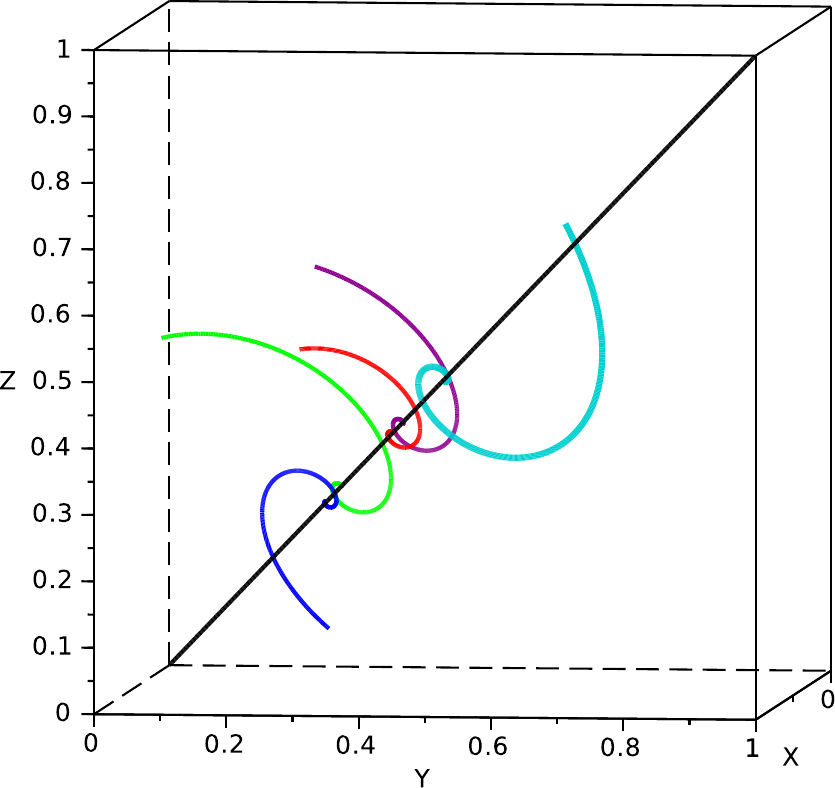}
\end{minipage}
\hfill
\begin{minipage}[t]{0.48\textwidth}
\includegraphics[width=\textwidth]{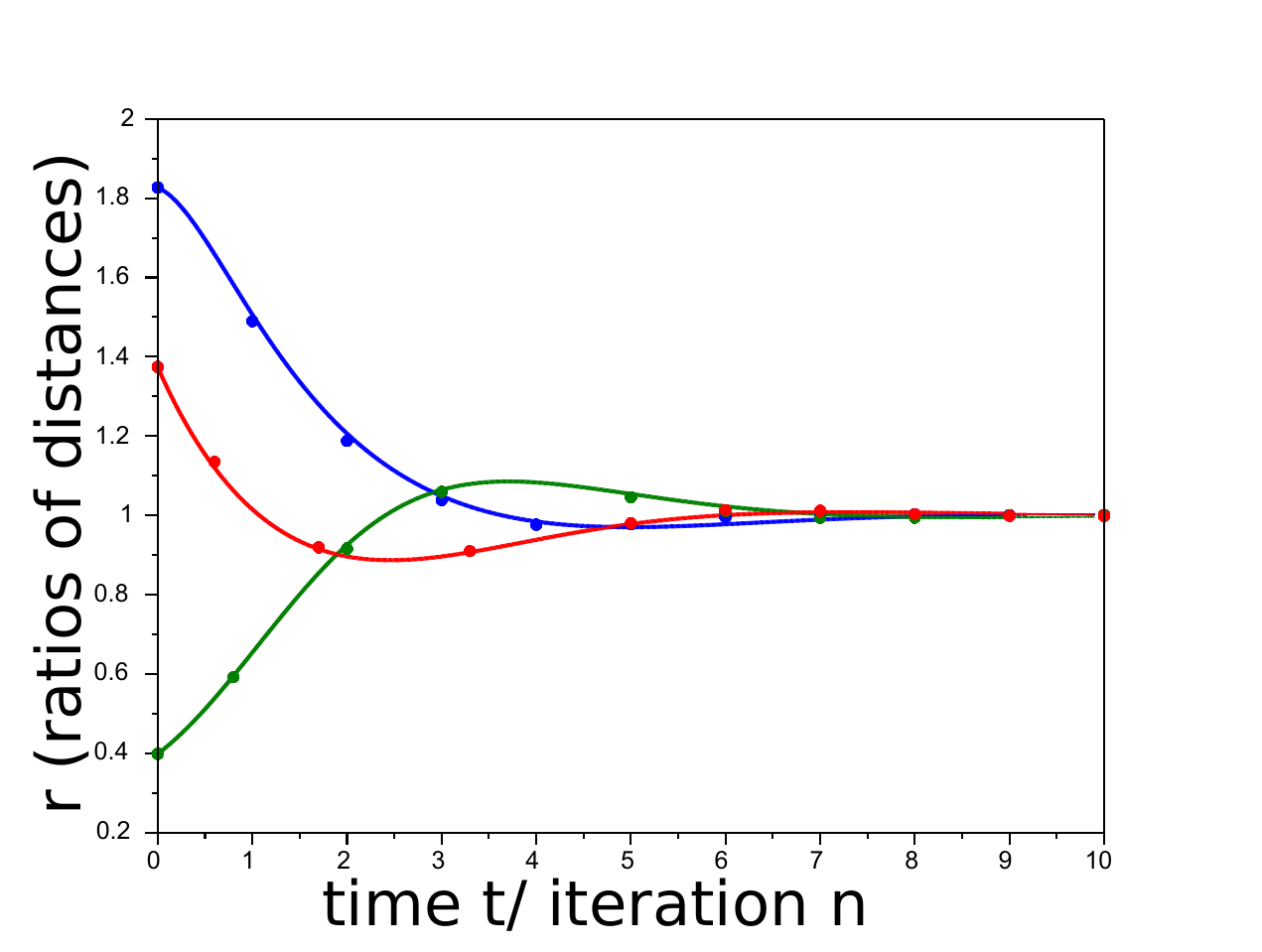}
\end{minipage}
\caption{On the left, the solution curves $R(t)$ for five arbitrarily chosen initial values. Observe how they all approach the stationary solution in spirals. On the right, the continuous curves $r_i(t)$ and discrete iterates $r_i^{(j)}$ for $j=0,\dots,10$ and $i=0,1,2$ for a randomly generated triangle. } 
\label{fig:comparison}
\end{figure} 
\subsubsection{Remarks on the relation to damped oscillations}
Besides from these nice dynamical properties, the system of linear differential equations~(\ref{e.system_ode}) above provides a link to a physical model: 
looking at the solution~(\ref{e.solution_ode}) we observe that 
\begin{align}
\dot{R}(t)&=(\sfrac{1}{3})\exp{(-\sfrac{t}{2})}\begin{pmatrix}(2R_2 - R_1 -R_0)\cos(\sfrac{\sqrt{3}t}{2}) + \sqrt{3}(R_1 - R_0)\sin(\sfrac{\sqrt{3}t}{2})\\(2R_0 - R_1 -R_2)\cos(\sfrac{\sqrt{3}t}{2}) + \sqrt{3}(R_2 - R_1)\sin(\sfrac{\sqrt{3}t}{2})\\(2R_1 - R_2 -R_0)\cos(\sfrac{\sqrt{3}t}{2}) + \sqrt{3}(R_0 - R_2)\sin(\sfrac{\sqrt{3}t}{2})\end{pmatrix}\nonumber\\
&=\begin{pmatrix} R_2(t) - a\\R_0(t) - a\\R_1(t)-a\end{pmatrix} \label{eq:derivative}.
\end{align}
In particular, we get
$$\dot{R}(0)= \begin{pmatrix} R_2- a\\R_0 - a\\R_1-a\end{pmatrix}=(\sfrac{1}{3})\begin{pmatrix}2R_2 -R_0- R_1\\2R_0 - R_1 - R_2\\2R_1 - R_0 - R_2\end{pmatrix}.$$  
Accordingly, using Eq.~\ref{eq:derivative} one has for the second derivatives
$$\ddot{R}(t) = \begin{pmatrix} \dot{R}_2(t)\\\dot{R}_0(t)\\\dot{R}_1(t)\end{pmatrix}=\begin{pmatrix} {R}_1(t)-a\\{R}_2(t)-a\\{R}_0(t)-a\end{pmatrix}.$$
Therefore, the system of linear differential equations could be equivalently written as three linear differential equations of second order where \newline$(R_0,R_1,R_2)=(R_0(0),R_1(0),R_2(0))$ are the initial values: 
\begin{align*}
& \ddot{R}_i(t) + \dot{R}_i(t) + R_i(t) = (\sfrac{1}{3})(R_i + R_{i+1} + R_{i+2}), \\
&R_i(0)=R_i, \; \dot{R}_i(0)=\sfrac{2}{3}R_{i+2} - \sfrac{1}{3}R_{i+1} - \sfrac{1}{3}R_{i},  i \in \mathbb{Z}_3.
\end{align*}
Each of these equations can be seen as describing a damped oscillation. 
Consequently, the dynamics of each distance $R_0(t),R_1(t)$ and $R_2(t)$ can be understood as the dynamics of a damped oscillation which depend on the initial values of each other. 

\subsubsection{Controlling the convergence rate by adaptive parameters}\label{sec:adaptive}
The illustration of the transformation as a damped oscillation helps to understand how one can control the speed of the convergence for each triangle to achieve a better global smoothing effect. Further, a similar approach -- although not arisen from a mathematical model -- was undertaken in \cite{VP13} to improve a geometric smoothing algorithm with success. For this purpose let us introduce three parameters $\alpha_i >0, i=0,1,2,$ which allow us to change proportionally the ratios $r_i, i=0,1,2$. Instead of mapping $x_{i,new}=r_ix_i$ for  $i=0,1,2$ we factorize by the parameter $\alpha_i$ getting 
$x_{i,new}=\alpha_i r_ix_i$ for $i=0,1,2$.
If we keep the centroid in the origin during the transformation we obtain for $i \in \mathbb{Z}_3$
$$x_{i,new}=(\sfrac{2}{3})\alpha_i r_ix_i-(\sfrac{1}{3})\alpha_{i+1}r_{i+1}x_{i+1}-(\sfrac{1}{3})\alpha_{i+2}r_{i+2}x_{i+2}.$$
As the equilateral triangle should be the fixed point of this transformation we impose that $-\sfrac{\alpha_1}{3}-\sfrac{\alpha_2}{3}+\sfrac{2\alpha_0}{3}=0$. This equation allows us to reduce the number of parameters by setting $\alpha_2=2\alpha_0-\alpha_1$. 
Using this observation we change the differential equations for $R(t)$ adequately and compute the solution depending on $\alpha_0$ and $\alpha_1$: the eigenvalues of the system matrix $A(\alpha_0,\alpha_1)$ are $\lambda_0=0$ corresponding to the equilibrium solution and 
$$\lambda_{1,2}=-\sfrac{1}{6}\left(2\alpha_0\alpha_1 -\alpha_1^2+2\alpha_0^2 \pm \sqrt{3}i\sqrt{5\alpha_0^2-4\alpha_1^2+8\alpha_0\alpha_1}\right).$$
Therefore, the basic solutions are the equilibrium $\overline{R}=\sfrac{1}{3}(R_0+R_1+R_2)$ and $y_1(t)=e^{\lambda_1t}v_1$ and $y_2(t)=e^{\lambda_2t}v_2$ where $v_1,v_2$ are the (probably complex) eigenvectors to $\lambda_1,\lambda_2$. 
Consequently, the convergence rate of the solutions depends exclusively on the real part $\mathfrak{R}(\lambda_{1,2}(\alpha_0,\alpha_1))=-\sfrac{1}{6}(2\alpha_0\alpha_1 -\alpha_1^2 + 2\alpha_0^2)$.
\begin{figure}[htbp]
\includegraphics[width=\textwidth]{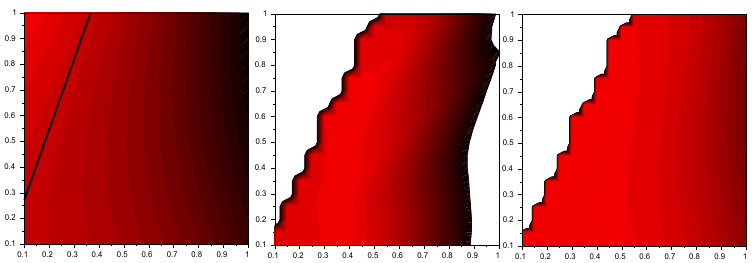}
\caption{Dependence on adaptive parameters: on the right, the contour lines for $(\alpha_0,\alpha_1)\mapsto \mathfrak{R}(\lambda_{1,2}(\alpha_0,\alpha_1))$ are depicted where the line of zeros $\alpha_1=(1+\sqrt{3})\alpha_0)$ is black. The values $\alpha_1<(1+\sqrt{3})\alpha_0)$ corresponds to a negative real part. On the right, we show the influence of the parameters on the mean mesh quality $q(\alpha_0,\alpha_1)$, in the middle the minimal element quality for the unit square triangulation in Fig.~\ref{fig:square}. The mesh transformation as defined in Sec.~\ref{s.mesh} was once applied. The darker the red, the worse the quality. The best minimal element quality $q_{\text{min}}=0.265$ and the best mean quality $q_{\text{mean}}=0.624$ are attained at $(0.4,0.5)$ and $(0.1,0.15)$, respectively. The worst minimal quality $q_{\text{min}}=0.044$ and the worst mean quality $q_{\text{mean}}=0.554$ are attained at $(1,0.1)$ and $(1,0.65)$. Only values with $\alpha_1 < 2\alpha_0$ were considered. These values do not necessarily coincide with the mathematical analysis as the mesh transformation involves that the barycenter of the images of the triangle transformation for each vertex is taken. }
\label{fig:adaptive}
\end{figure} 
If $\mathfrak{R}(\lambda)$ is strictly negative, the solutions $y_{1,2}(t)$ converge uniformly to the equilibrium. For this reason, we study the function $(\alpha_0,\alpha_1)\mapsto \mathfrak{R}(\lambda(\alpha_0,\alpha_1))$: it is helpful to write
$$-\sfrac{1}{3}\alpha_0^2+\sfrac{1}{6}\alpha_1^2-\sfrac{1}{3}\alpha_0\alpha_1 = (\alpha_0\alpha_1)\begin{pmatrix}-\sfrac{1}{3} & -\sfrac{1}{6}\\-\sfrac{1}{6} & \sfrac{1}{6}\end{pmatrix}\begin{pmatrix}\alpha_0\\\alpha_1\end{pmatrix}$$
as a symmetric bilinear form which is indefinite with eigenvalues $\sfrac{1}{12}(-1\pm\sqrt{13})$. The set of zeros is the union of two lines at 0. Supposing $\alpha_0,\alpha_1 > 0$, the half line $\alpha_1(\alpha_0)=(1+\sqrt{3})\alpha_0$ is the only set of zeros. For $\alpha_1 < (1+\sqrt{3})\alpha_0$, the real part is negative and hence, the solutions are uniformly converging to the equilibrium solution (see Fig.~\ref{fig:adaptive}). 
This allows us to conclude how 
$\alpha_0$ and $\alpha_1$ have to be chosen to control the convergence towards an equilateral triangle. Sometimes, it is preferable to decelerate a bit in order to improve the global convergence of the triangle mesh: if one thinks of Fig.~\ref{fig:flips}, the reversion of orientation in this example is mostly due to the factor $q(\Delta_0)^{-1}$,$q(\Delta_1)^{-1}$, the inverse of element qualities, by which the vectors $(x_1-c_0)$,$(x_2-c_1)$ are scaled. The parameter $\alpha_0$ is multiplied to these factors and can therefore impede the orientation-reversion if chosen sufficiently small (see Sec.~\ref{s.mesh} for numerical results).\\
 
\subsection{Demarcation from the spring-based mesh smoothing method}\label{s.springbased}
\subsubsection{Short notes on the classical spring-based mesh smoothing method}
Talking about springs and the equilibrium of their oscillations, one might be reminded of the spring-based mesh smoothing method which is widely used for dynamic mesh smoothing and implemented in common thermodynamical simulation tools (e.g. ANSYS Fluent). 
There is a variety of smoothing methods based on a spring analogy which could differ significantly with respect to results and employed numerical methods. This section is aimed at explaining the differences between our presented method and a classical spring-based mesh smoothing method as implemented in ANSYS Fluent (see User's Guide~\cite{Fluent}): 
the spring-based mesh smoothing method relies on the idea that the edges of a triangle mesh could be interpreted as springs clamped between the vertices. One tries to relocate the vertices such that an equilibrium for the spring force at each vertex of the network is attained for the global spring force of the network composed of the spring force at each vertex: let $x_i \in \mathbb{R}^3$ for $i=0,\dots,N-1$ denote the vertices of the triangle mesh. The force $F_i$ acting on a vertex $x_i$ is by Hooke's law proportional to the displacement of each vertex from the equilibrium state. Consequently, one obtains as force acting on vertex $x_i$ the sum of the forces applied by the springs attached at $x_i$, that is
$$F_i=\sum_{j \in N(i)}k_{ij}(\Delta x_j -\Delta x_i)$$
where $N(i)$ is the set of indices of neighboring vertices of $x_i$, $\Delta x_i$ the displacement of $x_i$ from its equilibrium state and $k_{ij}$ the spring constant of the spring between $x_i$ and $x_j$. Typically, the boundary nodes are fixed, and in order to propagate the constraints of the boundary (see \cite{ZE2005}), one supposes that each spring in equilibrium has length zero. Further, the spring constant is set proportional to the inverse of the edge length (see analysis in \cite{B2000}) to prevent the collision of nodes.As the boundary nodes are fixed, the initial displacement $\Delta x_i^{(0)}$ of the boundary nodes $x_i$ are known. The values for the internal nodes can then be computed iteratively by setting $F_i=0$:
$$\Delta x_i^{(m)} = \frac{\sum_{j \in N(i)}k_{ij}\Delta x_j^{(m-1)}}{\sum_{j \in N(i)}k_{ij}}$$
where $k_{ij}=\sfrac{1}{\left\|x_i-x_j\right\|}$ and $m >0$. One aborts the iteration if $\|\Delta x_{i}^{(m)}-\Delta x_i^{(m-1)}\| < \epsilon$ and updates the vertices for $n > 0$ by $$x_i^{(n)} = x_i^{(n-1)} + \Delta x_i^{(*)}.$$
By introducing different relaxation and damping parameters one could control the influence for boundary vertices to the overall mesh. The solution of the arising systems of linear equations can be achieved with different numerical solution schemes: often the Jacobi method is employed (see \cite{SV2014} for derivation and discussion).
\subsubsection{Differences to the presented geometric smoothing algorithm}
First of all, the two smoothing methods pursue two different approaches: the geometric element transformation relocates every vertex of a triangle (and mesh) explicitly while the spring-based mesh smoothing method is a global approach which is aimed at determining a global equilibrium state of the mesh which amounts consequently to solving a large system of equations. The employed solver are usually iterative what leads to an iterative relocation of the vertices; nevertheless, the approach targets to compute in one step the final vertex positions corresponding to the equilibrium state of the mesh. At the same time, the strategy how to obtain the equilibrium varies a lot in the literature: on the one hand, the Laplacian method is interpreted as spring analogy; on the other hand, one computes the potential energy of the spring network which is then minimized using an optimization method (e.g. \cite{SZ2000}). \\
With regard to the model equations we deduced in Sec.~\ref{sec:model}, the modeled springs of a single triangle are attached to the fixed centroid and influence each other by damping. The main difference to the spring analogy approach is that the discrete geometric triangle transformation models the behavior of damped springs which attain an equilibrium because of the damping while the spring-based method uses the description by springs to compute directly the equilibrium state.

\section{The mesh smoothing algorithm based on the geometric element transformation}\label{s.mesh}
\subsection{Mesh transformation for triangle and tetrahedral meshes}\label{s.meshtransformation}

As already described at the beginning and in Sec.~\ref{s:generalization}, any triangle transformation can be used to define a triangle mesh transformation: firstly, apply the triangle transformation to every triangle element of the mesh, secondly, map the vertex onto the barycenter of its images under the triangle transformations. Let us be more precise:   
let $M_{\Delta}$ be a triangle mesh defined by the set $V=(x_0,\dots,x_{N-1})$, $x_i \in \mathbb{R}^2$ or $x_i \in \mathbb{R}^3$, of vertices, indexed in a counter-clockwise fashion, and the set $E=(\Delta_0, \Delta_1, \dots, \Delta_{n-1})$ of triangle elements $\Delta_i=(i_0,i_1,i_2)$ with $i_j \in \left\{0,\dots,N-1\right\}$. We define the triangle mesh algorithm in Algorithm~\ref{alg:TriangleMesh}.\\
\begin{algorithm}[h]\label{alg:TriangleMesh}
 \caption{Triangle mesh algorithm}
 \KwData{nodes, elements,boundary, NumberofInnerIterations, NumberofIterations, ErrorBound}
 compute mesh quality $\text{Quality}(0)$, set $\text{Quality}(1)=1$\;
 \For{$i=0:\text{NumberofNodes}-1$}
{find triangles $\Delta$ which contain the index $i$, denote the set of indices of adjacent triangles by $J(i)$.}
\While{Quality$(n)$ -Quality$(n-1)$ $>$ ErrorBound \& $n < \text{NumberofIterations}$}{
\For{$i=0:\text{NumberofNodes}-1$}
{update vertex $x_i$ as following:\;
 \For{$k\in J(i)$}
{iterate the triangle $\Delta_k$ formed by $(x_{k_0},x_{k_1},x_{k_2})$ NumberofInnerIteration-times with $\theta_k$ given by (\ref{e.transformation}) to $\Delta_{k,new}$\;
rescale $\Delta_{k,new}$ following Eq.~(\ref{eq:rescaling}) to keep the area constant.\;
\If{orientation of $\Delta_{k,new}$ is reversed,}
{reset $\Delta_{k,new}=\Delta_k$.}}
{update the vertex $x_i$ by the arithmetic mean $\frac{1}{|J(i)|}\sum_{k \in J(i)} \theta_{k}(x_i)$.
  }}
 Apply boundary constraints.}
\end{algorithm} 
\begin{rem}
\begin{enumerate}
\item It has been proven to be more efficient to iterate each element three times before taking the barycenters and updating the mesh vertices.
\item With the abbreviatory expression \emph{Apply boundary constraints} we summarize different method to conserve the shape of the meshed object: vertices on the boundary of a planar mesh are projected onto the boundary after the iteration. Vertices on the surface or on feature lines of volume meshes are fixed. 
\end{enumerate}
\end{rem}
One can also adapt the algorithm in order to smooth quadrilateral meshes. We define this smoothing algorithm in Algorithm~\ref{alg:quadmesh}.\\
\begin{figure}
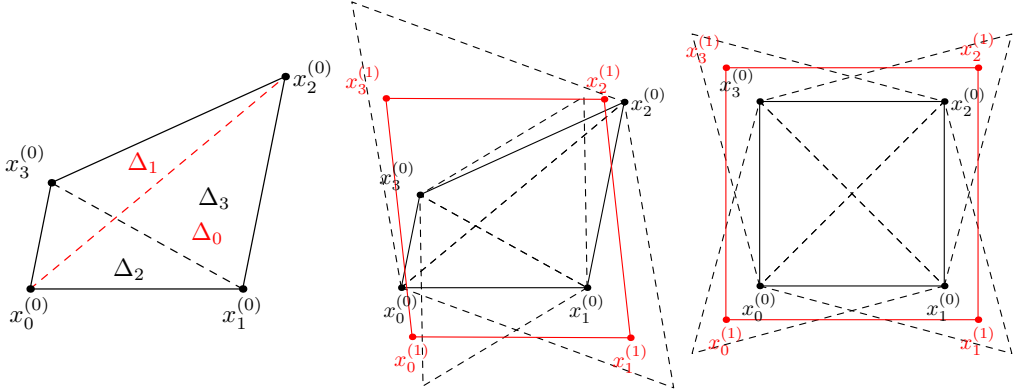

\begin{minipage}{0.32\textwidth}
\includegraphics[width=\textwidth]{quad.mps}
\end{minipage}
\begin{minipage}{0.32\textwidth}
\includegraphics[width=\textwidth]{quad_iterated.mps}
\end{minipage}
\begin{minipage}{0.32\textwidth}
\includegraphics[width=\textwidth]{quadrat.mps}
\end{minipage}
\caption{On the left, a quadrilateral with the four triangles $\Delta_0,\dots,\Delta_3$ which are created by its two diagonals and in the middle its first iterate of the quadrilateral (keeping the diagonals fixed). On the right, we show the first iterate of a square (without rescaling the area) to illustrate that the transformation keeps squares invariant (up to scaling). One iterates a non-convex quadrilateral in the exactly same way. }
\label{fig:quad}
\end{figure}
\begin{algorithm}[h]
\label{alg:quadmesh}
\caption{Quadrilateral mesh algorithm}
 \KwData{nodes, elements,boundary, NumberofInnerIterations, NumberofIterations, ErrorBound}
 compute mesh quality $\text{Quality}(0)$, set $\text{Quality}(1)=1$\;
 \For{$i=0:\text{NumberofNodes}-1$}
{find quadrilateral $Q$ which contain $x_i$. Denote the set of indices of adjacent quadrilaterals by $J(i)$.}
\While{Quality$(n)$ -Quality$(n-1)$ $>$ ErrorBound \& $n < \text{NumberofIterations}$}{ 
\For{$i=0:\text{NumberofNodes}-1$}
{update vertex $x_i$ as following:
 \For{$k\in J(i)$}
{define four triangles with the help of the diagonals (see Fig.~\ref{fig:quad}) and apply the Triangle Mesh Algorithm~\ref{alg:TriangleMesh} to each of them a NumberofInnerIterations-times,\;set 
$\overline{x}_{i}$ on the barycenter of its three images}
update $x_i$ by the arithmetic mean $\frac{1}{|J(i)|}\sum_{k \in J(i)}\overline{x}_i$\; 
  }
Apply boundary constraints\;
Compute mesh quality Quality$(n)$.}
 \end{algorithm} 
Apart from quadrilateral meshes, the triangle mesh algorithm can be naturally extended to a volume mesh of tetrahedra by transforming the triangle faces of each tetrahedron: let $M_T$ be a mesh defined by the set $V=(x_0,\dots,x_{N-1})$ of vertices $x_i \in \mathbb{R}^3$ and the set $E=(T_0,\dots,T_{n-1})$ of tetrahedral elements $T_i=(i_0,i_1,i_2,i_3)$ with $i_j \in \left\{0,\dots,N-1\right\}$. We implemented the smoothing algorithm as given by Algorithm~\ref{alg:tetmesh}. \\
\begin{algorithm}[h]
 \caption{Tetrahedral mesh algorithm}\label{alg:tetmesh}
\KwData{nodes, elements,boundary, NumberofInnerIterations, NumberofIterations, ErrorBound}
Compute initial mesh quality $\text{Quality}(0)$\;
 \For{$i=0:\text{NumberofNodes}-1$}
{find tetrahedra $T$ which contain $x_i$. Denote the set of indices of adjacent tetrahedra by $J(i)$.}
\While{$\text{Quality}(n) -\text{Quality}(n-1) > ErrorBound$ \& $n < \text{NumberofIterations}$}{
\For{$i=0:\text{NumberofNodes}-1$}
{update vertex $x_i$ as following:
 \For{$k\in J(i)$}
{obtain $T_{k,new}$ by applying the Triangle Mesh Algorithm~\ref{alg:TriangleMesh} a NumberofInnerIteraions-times to $T_k$ as a closed mesh of four triangles,\; 
rescale $T_{k,new}=(\overline{x}_{k_0},\overline{x}_{k_1},\overline{x}_{k_2},\overline{x}_{k_3})$ to keep the volume constant.}
{update the vertex $x_i$ by the arithmetic mean $\frac{1}{|J(i)|}\sum_{k \in J(i)}\overline{x}_i$\;
  }}
Apply boundary constraints\;
Compute mesh quality Quality$(n)$}
\end{algorithm}

\subsection{Derived algorithm for hexahedral meshes}\label{s.hexmesh}

Apart from triangular and tetrahedral meshes, hexahedral meshes are maybe the most important class of meshes for applications. This is our main motivation to generalize our algorithm to hexahedral meshes. We use the fact that every hexahedron defines a octahedron whose vertices are the barycenters of the six faces of the hexahedron, see Fig.~\ref{fig:hexaeder} as an illustration. 
\begin{figure}
\centering
\includegraphics[width=0.3\textwidth]{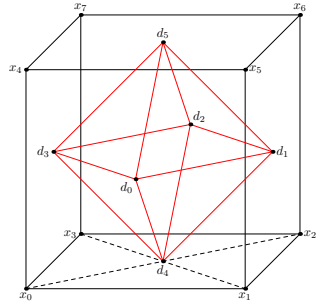}
\caption{A regular hexahedron and its dual octahedron in red.}
\label{fig:hexaeder}
\end{figure}
Conversely, every octahedron determines a hexahedron by taking the barycenters of its eight faces. In this way, we compute to every hexahedron its corresponding octahedron. This could be treated as a closed triangle mesh. Let us now define more precisely our algorithm:  \\
let $M_H$ be a hexahedral mesh defined by the set $V=(x_0,\dots,x_{N-1})$, $x_i \in \mathbb{R}^3$, of vertices, indexed counter-clockwisely, and the set $E=(H_0,\dots,H_{n-1})$ of hexahedral elements $H_i=(i_0,\dots,i_7)$ with $i_j \in \left\{0,\dots,N-1\right\}$. The smoothing algorithm is then defined by Algorithm~\ref{alg:hexmesh}. \newline
\begin{algorithm}[h]
\caption{Hexahedral mesh algorithm}\label{alg:hexmesh}
 \KwData{nodes, elements,boundary, NumberofInnerIterations, NumberofIterations, ErrorBound}
Compute initial mesh quality $\text{Quality}(0)$\;
 \For{$i=0:\text{NumberofNodes}-1$}
{find hexahedra $H$ which contain $x_i$. Denote the set of indices of adjacent hexahedra by $J(i)$.}
\While{$\text{Quality}(n) -\text{Quality}(n-1) > ErrorBound$ \& $n < \text{NumberofIterations}$}{
\For{$i=0:\text{NumberofNodes}-1$}
{update vertex $x_i$ as following:
 \For{$k\in J(i)$}
{compute to $H_k$ its dual octahedron $O_k$ whose vertices are defined by the barycenters of the faces of $H_k$\;
obtain $O_{k,new}$ by applying the Triangle Mesh Algorithm~\ref{alg:TriangleMesh} a NumberofInnerIterations-times to $O_k$ as a closed mesh of eight triangles\; 
compute $H_{k,new}=(\overline{x}_{k_0},\overline{x}_{k_1},\overline{x}_{k_2},\overline{x}_{k_3})$ by taking the barycenters of the triangles faces of $O_{k,new}$\;
rescale $H_{k,new}$ to keep its volume constant\;}
update $x_i$ by the arithmetic mean $\frac{1}{|J(i)|}\sum_{k \in J(i)}\overline{x}_i$\; 
  }
Apply boundary constraints\;
Compute mesh quality Quality$(n)$}
 \end{algorithm} 
\begin{rem} \begin{enumerate}
\item An equivalent application of our algorithm to a hexahedral mesh is the following: any hexahedron could be subdivided into four tetrahedra whose edges are given by the diagonals of the faces of the hexahedron. Apply then the algorithm to this mesh of four tetrahedra. 
\item Any platonic solid, these are tetrahedra, hexahedra, octahedra, dodecahedra and icosahedra, can be subdivided into regular tetrahedra. In this way, one could apply our algorithm to any mesh built by these polyhedra. 
\end{enumerate}    
\end{rem}
\section{Numerical results and discussion on mesh quality improvement} \label{s.numerical_results}
In the following we shortly discuss the numerical results of our smoothing algorithm which we implemented in C++. The implementation is straight-forward as described above and not optimized with regard to run time and storage. For this reason, the run times for our algorithm depicted below should be taken as an upper bound for the run time with potential to diminish considerably. The geometric triangle transformation is firstly applied to all elements and the nodes of these iterated elements are saved as intermediate nodes. Then the nodes are updated by the barycenter of these intermediate nodes. We have chosen two simple triangle, one quadrilateral, one tetrahedral and one hexahedral meshes as examples and display the mean quality improvement. This article does not focus on computational details, so this section serves more as a proof of concept that the geometric element transformation, described in this article, could be the base of a very efficient mesh smoothing algorithm which has a comparable run time to Laplace-type algorithms, but obtains usually better quality results. \\
More precisely, as Laplace algorithm we choose a so-called \emph{SmartLaplace} which is namely the classical approach with the addition that the inversion of elements is inhibited. It is the most appropriate and fair choice as we do the same to prevent inverted elements within the geometric mesh smoothing algorithm. \\
Apart from the Laplace algorithm we use for the volume meshes a global optimization method implemented in MESQUITE 2.3.0 (Mesh Quality Improvement Toolkit) to compare our obtained smoothing results with regard to element and mean quality improvement and run time. As objective function for the global optimization we select the inverse of the mean ratio quality (geometric mean of the element mean ratio quality) and as numerical optimization scheme the feasible Newton method (see user's guide \cite{M14} for details). The source code was in all cases compiled using g++ under Linux. For all testing we use the same personal computer equipped with a quad-core-processor (Intel(R) Core(TM) i7 CPU 870 @293 GHz, 1197 MHz). The algorithms are in all cases stopped if the mean mesh quality improvement was less than $10^{-4}$ during the last iteration step, this means, that the resulting meshes are all converged. All initial sample meshes except from the quadrilateral mesh are valid meshes, that is, without inverted elements. For the quadrilateral mesh we have observed an untangling due to our algorithm so that we have chosen an invalid mesh.      

\subsection{Quality measures}
We briefly introduce the quality measures which we use for quality assessment of the mesh smoothing algorithms. We have chosen the mean ratio quality as standard and widely used measure for the shape of an element and the arithmetic mean of the element quality measure to measure the quality of the entire mesh.  
\paragraph{Triangle and quadrilateral mesh}
As quality measure $q_{\Delta}$ for a single triangle element we use the mean ratio quality measure given by 
\begin{align*}
q_{\Delta}&=\frac{2\det(S)}{\trace(S^tS)}, \quad\mbox{where}\;S=D(\Delta)W^{-1},\\
D(\Delta) &= (x_1 - x_0, x_2-x_0) \quad \Delta = (x_0,x_1,x_2) \in (\mathbb{R}^2)^3,\\
W &= \begin{pmatrix}\sfrac{1}{2} &-1\\ \sfrac{\sqrt{3}}{2}& 0 \end{pmatrix}.
\end{align*}
For a single quadrilateral element $q_{\Delta}$ we use the edge length ratio. 
The \textit{quality measure for a triangle or quadrilateral mesh} $V=(\Delta_0,\dots,\Delta_{|V|-1})$ is then the arithmetic mean of the quality measure $q_{\Delta}$ for every element $\Delta \in V$: $q_V=\frac{1}{\left|V\right|}\sum_{\Delta \in V}q_{\Delta}.$
\paragraph{Tetrahedral mesh}
Let $T=(x_0,x_1,x_2,x_3)$ with $x_i \in \mathbb{R}^3$ be a tetrahedron. As quality measure $q_T$ for $T$ we use the \emph{mean ratio quality measure} which is defined as following (see \cite{Knupp2001}):
\begin{align*}
q_T(T)&=\frac{3 \det(S)^{2/3}}{\trace(S^tS)}, \quad S=D(T)W, \;\mbox{where}\\
D(T)&=(x_1-x_0, x_2-x_0, x_3-x_0),\quad W=\begin{pmatrix}1& 1/2 &1/2\\0& \sqrt{3}/2&\sqrt{3}/6\\0&0&\sqrt{2/3}\end{pmatrix}. 
\end{align*}
As quality measure for a tetrahedral mesh $V=(T_0, \dots, T_{|V|-1})$ we use the mean quality measure of every element: 
$q_V=\frac{1}{|V|}\sum_{T \in V}q_T(T).$ 
\paragraph{Hexahedral mesh}
Let $H=(x_0,\dots,x_7)$ with $x_i \in \mathbb{R}^3$ be a hexahedron. As quality measure $q_H$ for $H$ we use the \emph{mean ratio quality measure} which is defined using a subdivision of $H$ into eight tetrahedra $T_0,\dots,T_7$ given by $T_0=(x_0,x_3,x_4,x_1), T_1=(x_1,x_0,x_5,x_2), T_2=(x_2,x_1,x_6,x_3),T_3=(x_3,x_2,x_7,x_0), T_4=(x_4,x_7,x_5,x_0), T_5=(x_5,x_4,x_6,x_1), T_6=(x_6,x_5,x_7,x_2)$ and $T_7=(x_7,x_6,x_4,x_3)$. The quality measure of $H$ is then defined as the average of the eight values of the quality measure for the internal tetrahedra, with the difference that $W$ is set to identity, so we get:  
\begin{align*}
q_H(H)&=\frac{1}{8}\sum_{k=1}^8\frac{3 \det(S_k)^{2/3}}{\trace(S_k^tS_k)}, \quad S_k:=D(T_k), \;\mbox{where}\\
D(T_k)&=(x_{k,1}-p_{k,0}, x_{k,2}-x_{k,0},x_{k,3}-x_{k,0}), \quad \mbox{with}\; T_k=(x_{k,0},x_{k,1},x_{k,2},x_{k,3}).
\end{align*}

\subsection{Two-dimensional meshes}
\subsubsection{Example 1: triangulated unit square}
The very first example is a randomly generated triangulation of the unit square, see Fig.~\ref{fig:square}. In Fig.~\ref{fig:adaptive} we have studied the influence of the choice of adaptive parameters $(\alpha_0,\alpha_1)$, and following those results, we choose $(\alpha_0,\alpha_1)=(0.1,0.15)$ and call the so adapted triangle mesh algorithm~\ref{alg:TriangleMesh} \emph{GETMe adaptive}. 
Due to the topology of the present example, the standard geometric algorithm would produce invalid elements if not actively impeded. Consequently, as several elements have to be reset in each iteration step to prevent their flipping this decelerates the algorithm. Because it does not at all produce invalid elements, the usage of adaptive parameters is for the present mesh quite effective and produce a slightly better smoothing result faster than the standard algorithm (see Fig.~\ref{fig:quality1} or Fig.~\ref{fig:vergleich_tri}). 
\begin{figure}[htbp]
\centering

\begin{minipage}{0.47\textwidth}
\flushleft
 \includegraphics[width=\textwidth]{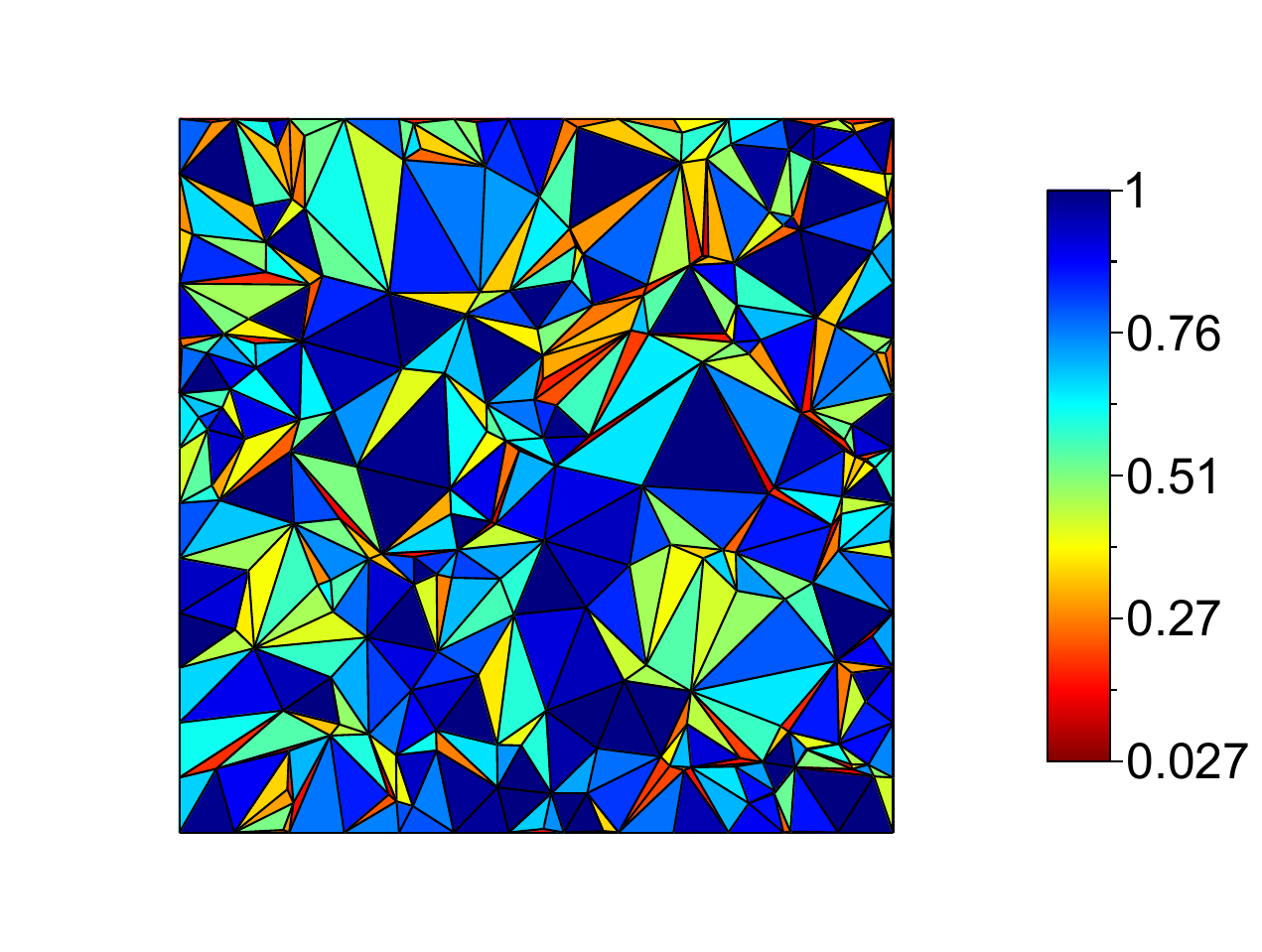}
\end{minipage}
\begin{minipage}{0.47\textwidth}
\flushright
\includegraphics[width=\textwidth]{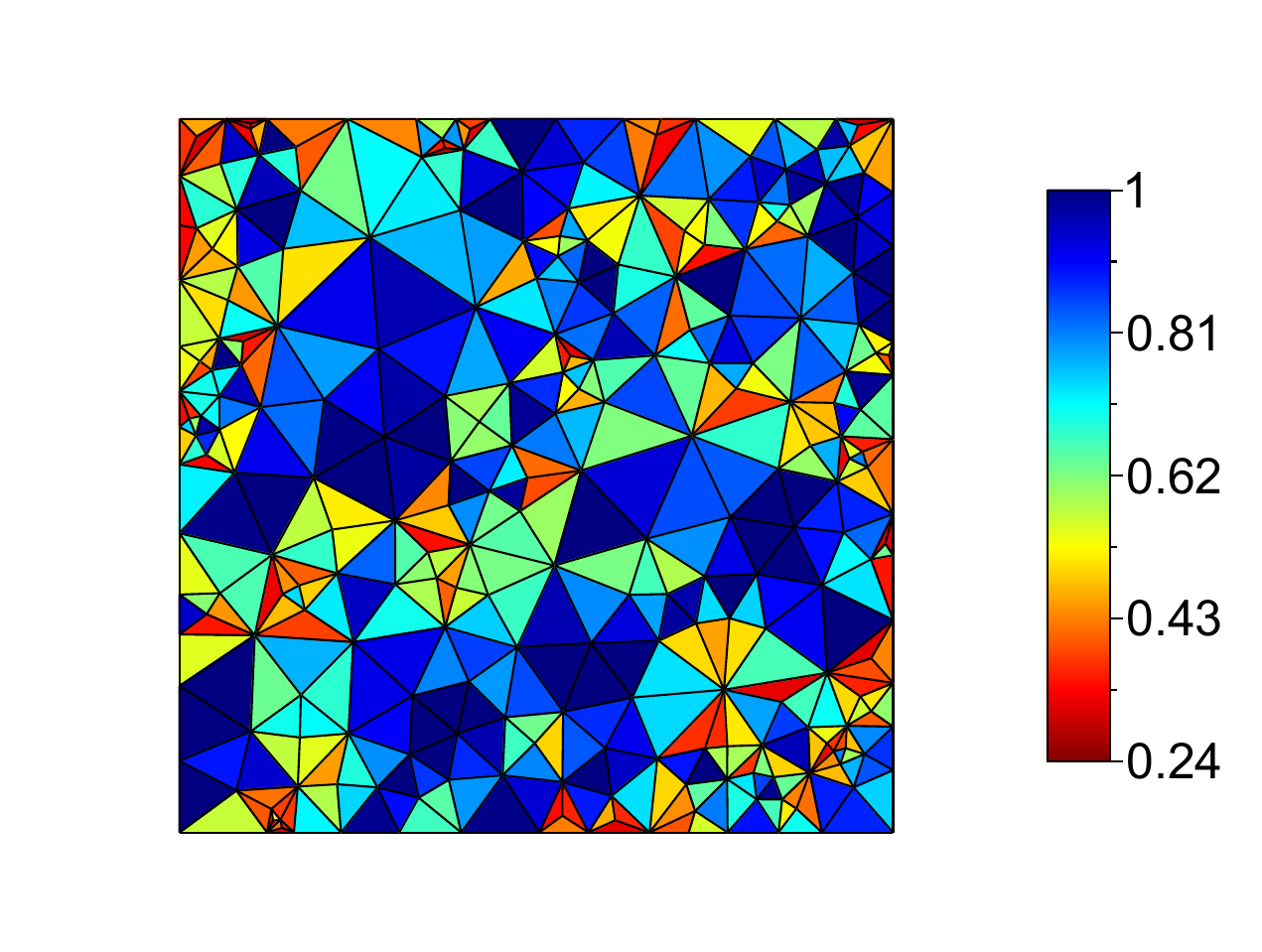}
\end{minipage}
\caption{On the left, initial triangulation with mean ratio quality $q_V=0.609$ (arithmetic mean) and $q_V=0.528$ (geometric mean), on the right, the triangulation smoothed by \emph{GETMe adaptive} with $q_V=0.826$ (arithmetic mean) and $q_V=0.806$ (geometric mean).}
\label{fig:square}
\end{figure} 

\begin{figure}[htbp]
\centering
\begin{minipage}{0.48\textwidth}
\begin{flushleft}
\includegraphics[width=8cm,height=5.5cm,keepaspectratio]{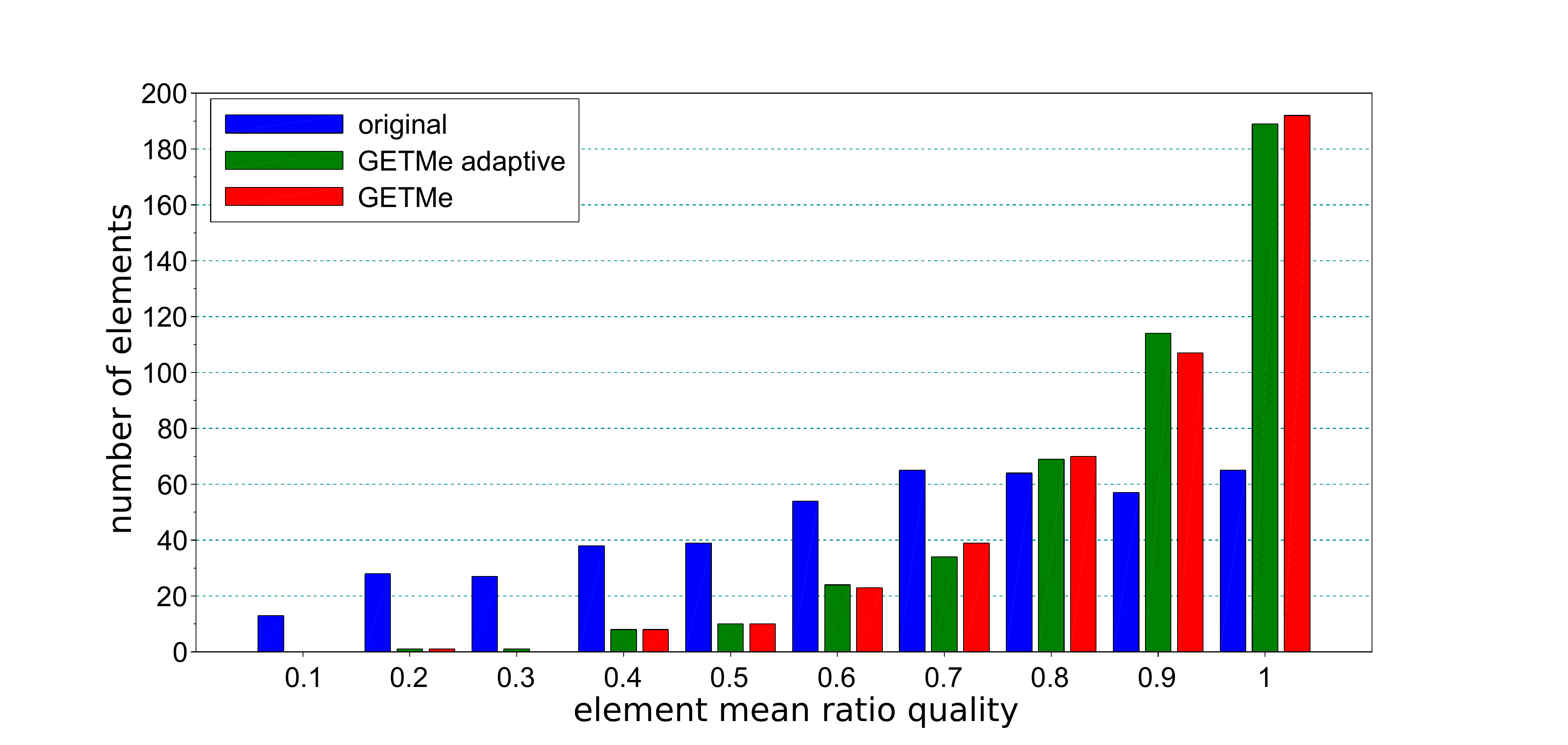}
\end{flushleft}
\end{minipage}
\begin{minipage}{0.48\textwidth}
\flushright
\includegraphics[width=7.5cm,height=5cm,keepaspectratio]{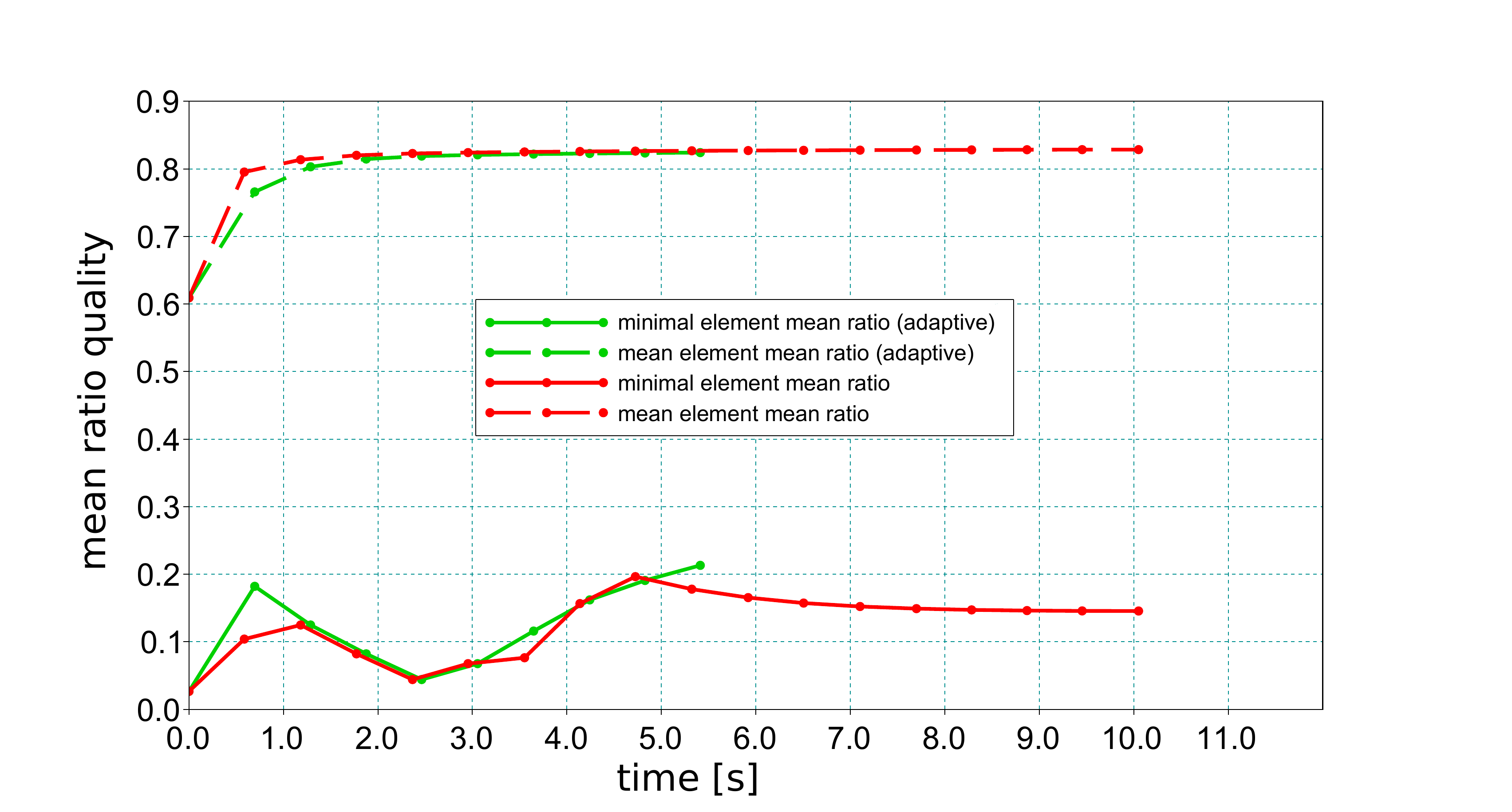}
\end{minipage}
\caption{Mean quality element measure $q_{V}$ before and after the smoothing of the mesh in Fig.~\ref{fig:square} and improvement of the minimal element and the mean mesh quality over run time (the marks corresponds to iteration steps), both for the standard triangle mesh Algorithm~\ref{alg:TriangleMesh} and the adapted one with adapted parameters $(\alpha_0,\alpha_1)=(0.1,0.15)$ (see Sec.~\ref{sec:adaptive})}.
\label{fig:quality1}
\end{figure} 

\subsubsection{Example 2: planar disk}
For this example, we shortly study the performance of the standard and adaptive triangle mesh Algorithm~\ref{alg:TriangleMesh}, called \emph{GETMe} and \emph{GETMe adaptive}, respectively. The triangle sample mesh displayed in Fig.~\ref{fig:trimesh} is a planar triangle mesh which was generated by triangulating a planar disk. Due to its topology, all algorithms converge to the same optimal triangulation (see Fig.~\ref{fig:trimesh} on the right). Looking at Fig.~\ref{fig:vergleich_tri}, one infers that the use of adapted parameters is a viable possibility to accelerate the convergence.\\
For the moment, we have chosen globally constant parameters for the whole mesh, but like an adaptive step size strategy one should choose the parameters according to the distortion of the mesh regions, for example checking the distortions of the one- or two-ring neighborhood for each triangle. It seems in any case recommended to start with small parameters and to accelerate the convergence rate, that is, to increase the adaptive parameters, with growing mesh quality. Further, one could adapt the parameters during each iteration.   
\begin{figure}[htbp]
\centering
\includegraphics[width=\textwidth]{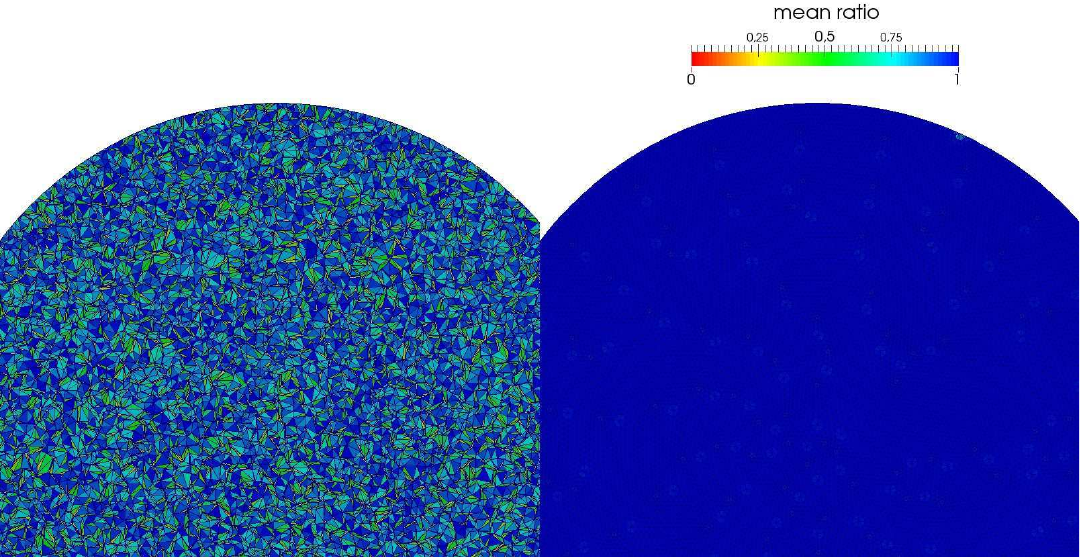}
\caption{Planar triangle mesh of 38560 elements with mean edge ratio $q_V=0.399$ (arithmetic mean) and $q_V=0.185$ (geometric mean) and minimal element quality $\min q_{\Delta}=10^{-6}$ smoothed using adapted parameter $(\alpha_0,\alpha_1)=(0.1,0.15)$ (see Sec.\ref{sec:adaptive}) to $q_V=0.941$ (arithmetic mean and geometric mean) with minimal element quality $\min q_{\Delta}=0.002$.}
\label{fig:trimesh}
\end{figure} 
%
\begin{figure}[htbp]
\centering
\begin{minipage}{0.48\textwidth}
\begin{flushleft}
\includegraphics[width=7.5cm,height=5cm,keepaspectratio]{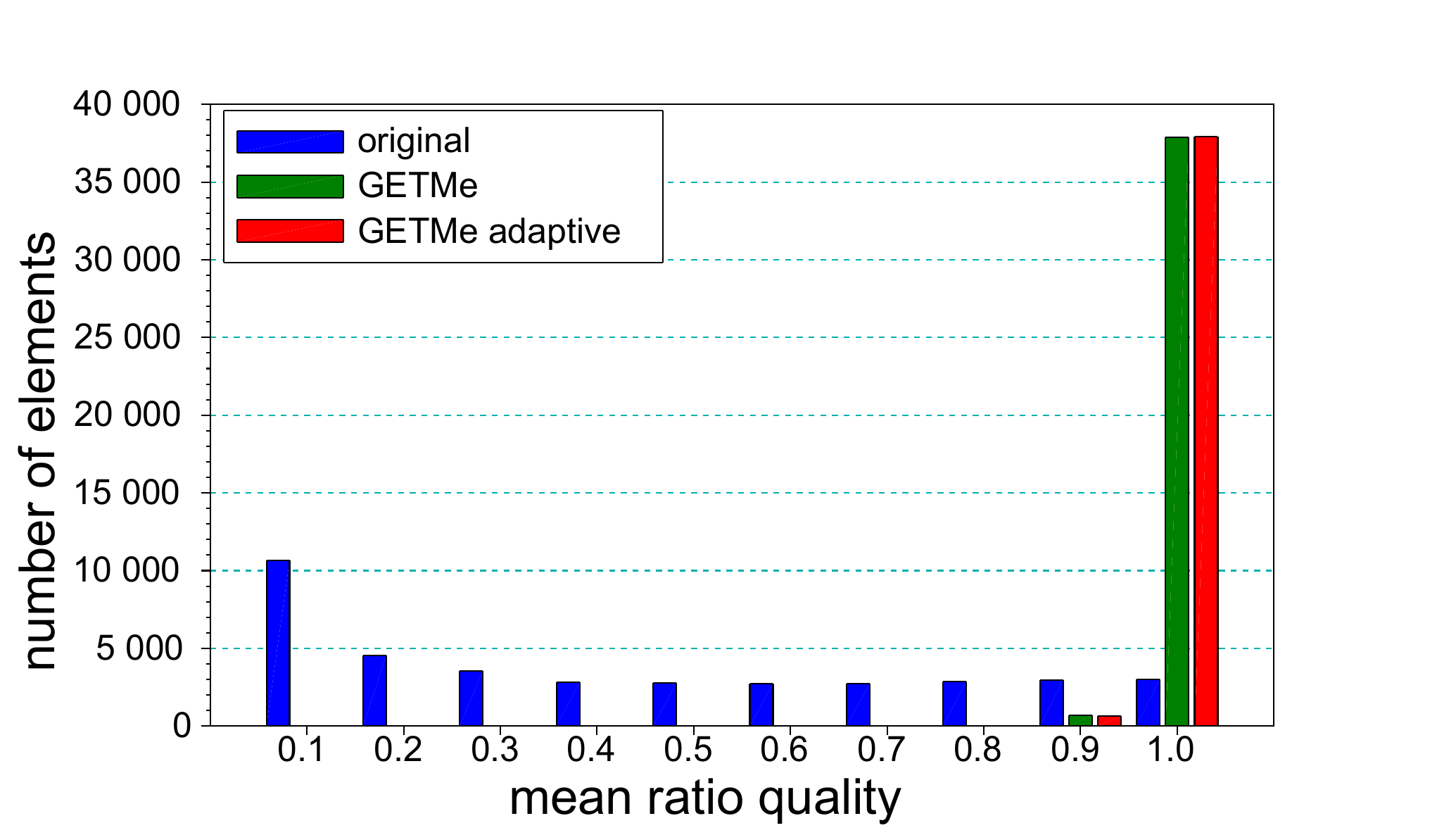}
\end{flushleft}
\end{minipage}
\begin{minipage}{0.48\textwidth}
\flushright
\includegraphics[width=7.5cm,height=4.3cm,keepaspectratio]{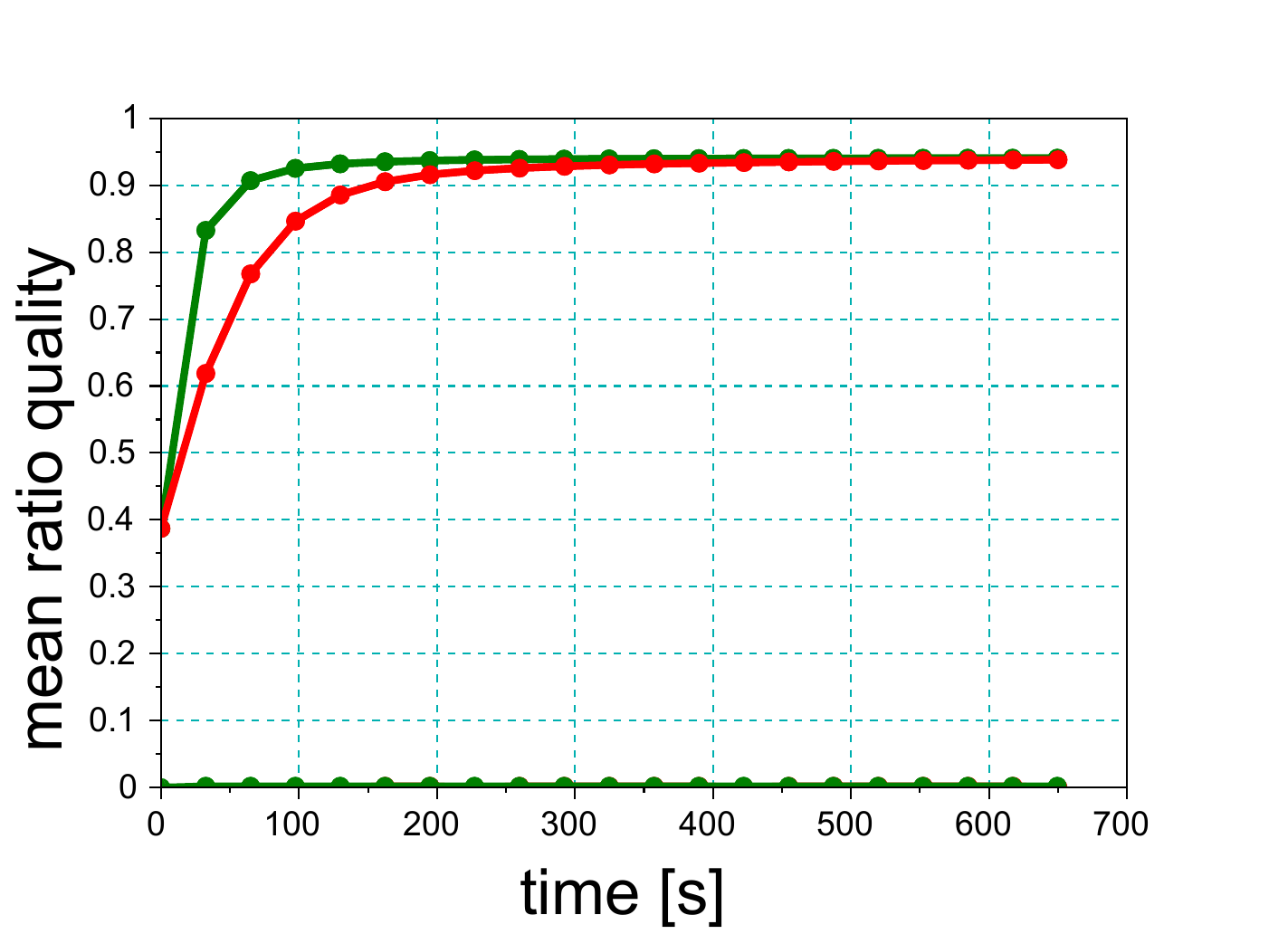}
\end{minipage}
\caption{Histogram of mesh quality improvement and comparison of mesh quality improvement for the triangulated disk of Fig.~\ref{fig:trimesh} using standard and adaptive parameters. }
\label{fig:vergleich_tri}
\end{figure}
\subsubsection{Example 3: planar quad mesh}
As a simple quadrilateral example mesh (see on the left in Fig.~\ref{fig:quadmesh}) we choose a square with an omitted inner circle which is randomly decomposed into quadrilaterals and apply Algorithm~\ref{alg:quadmesh}. This sample mesh is taken from Mesquite called \emph{hole in square}. We decide heuristically to apply the triangle mesh algorithm~\ref{alg:TriangleMesh} ten times to each subtriangle of each quadrilateral (see definition of the algorithm) and determine as best adaptive parameters again $(\alpha_0,\alpha_1)=(0.1,0.15)$. The results are shown in Fig.~\ref{fig:quadmesh}. 
\begin{figure}[htbp]
\centering

\includegraphics[width=\textwidth]{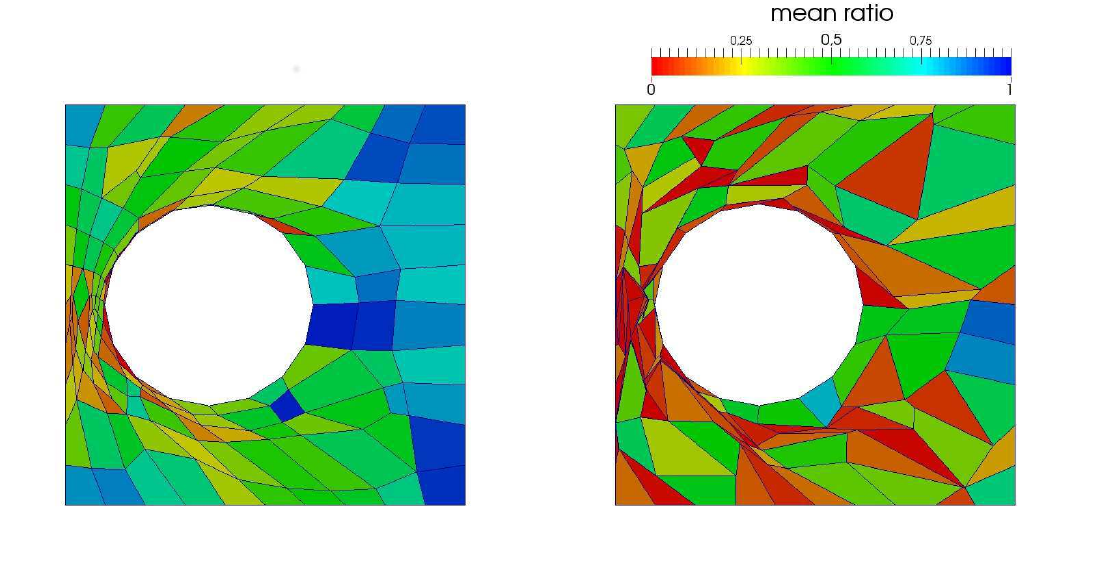}
\caption{On the left, the smoothed mesh with $q_V=0.447$ and on the right, the initial quadrilateral mesh with a mesh quality $q_V=0.259$. The color mapping is taken from ParaView (see \cite{VTK}).}
\label{fig:quadmeshing}
\end{figure}
\begin{figure}
\includegraphics[width=0.5\textwidth]{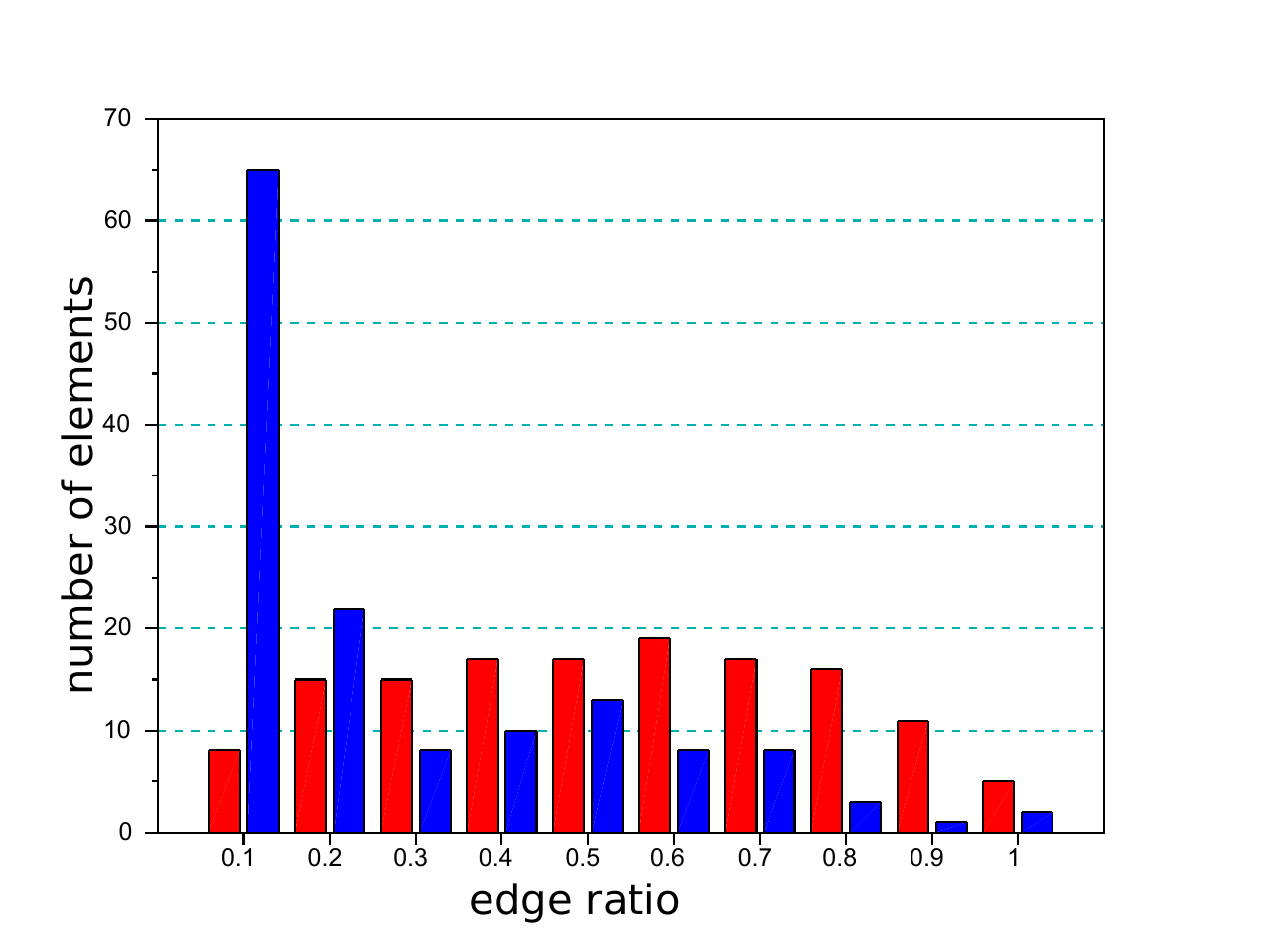}
\caption{Histogram of the original (in blue) and the smoothed mesh (in red) in Fig.~\ref{fig:quadmeshing}.}
\label{fig:quadmesh}
\end{figure} 
\subsection{Three-dimensional meshes}
\subsubsection{Example 1: tetrahedral mesh}
Our tetrahedral sample mesh (Fig.~\ref{fig:prt4}) is based on a model provided by the 3D meshes research database GAMMA maintained by INRIA which was meshed by 82958 tetrahedral elements.
We apply the tetrahedral mesh Algorithm~\ref{alg:tetmesh} with adaptive parameters $(\alpha_0,\alpha_1)=(0.6,0.6)$ and as comparison a Laplace algorithm and Mesquite as described above to the same initial mesh. 
\begin{figure}[htbp]
\centering
\includegraphics[width=\textwidth]{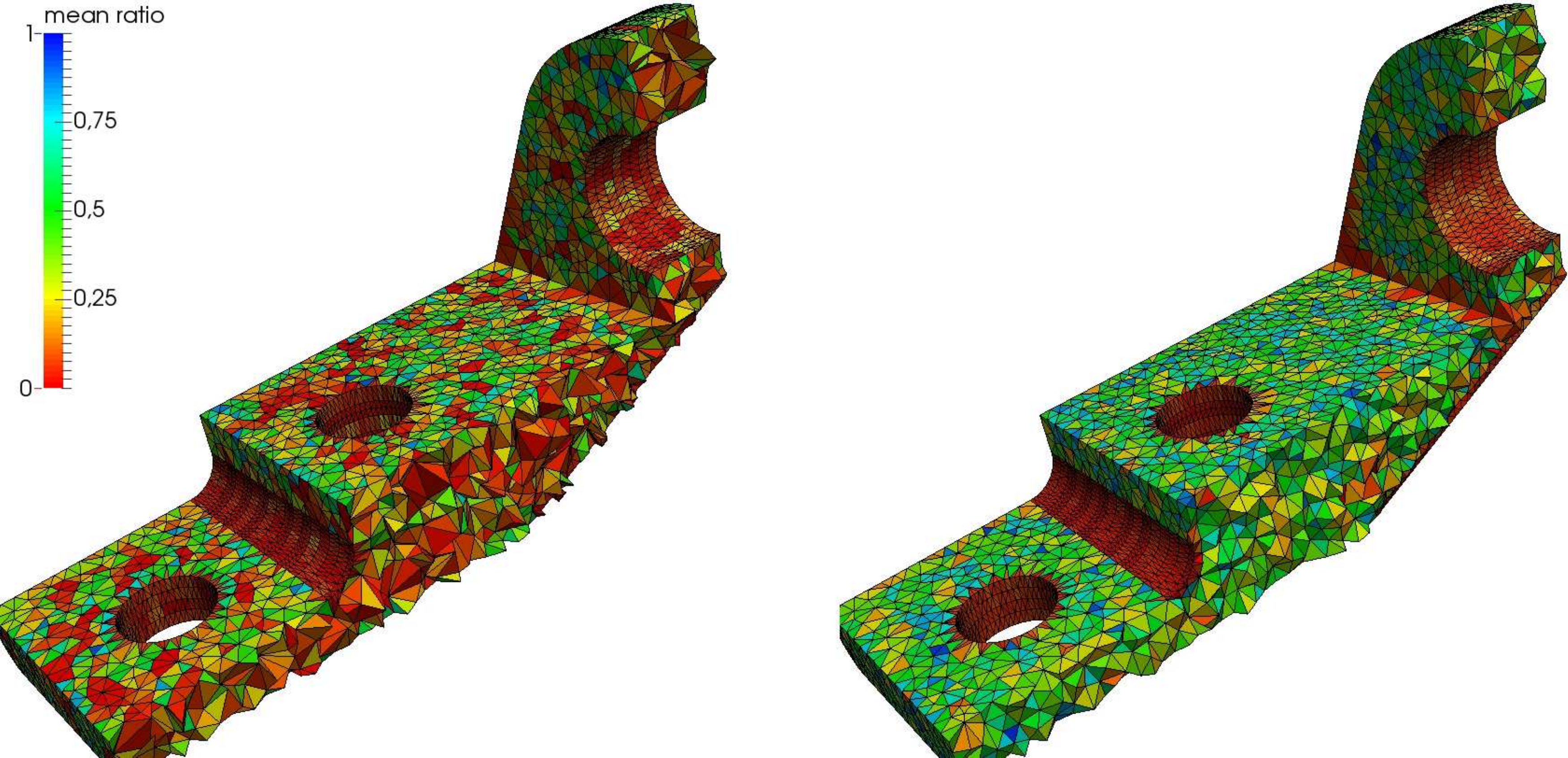}
\caption{Cross section of a tetrahedral mesh with 82985 elements with initial mean quality $q_V=0.484$ (geometric mean) and minimal element quality $\min q_T =4.9*10^{-3}$ improving to $q_V=0.741$ (geometric mean) and minimal element quality $\min q_T = 0.002$ with adaptive parameters $(\alpha_0,\alpha_1)=(0.6,0.6)$. The color mapping is taken from ParaView (see \cite{VTK}).}
\label{fig:prt4}
\end{figure}
%
Our smoothing result shown in Fig.~\ref{fig:vergleich} is nearly the same than the one obtained with Mesquite, but the run time is slightly longer.
\begin{figure}[htbp]
\centering
\begin{minipage}{0.47\textwidth}
\includegraphics[width=1.2\textwidth]{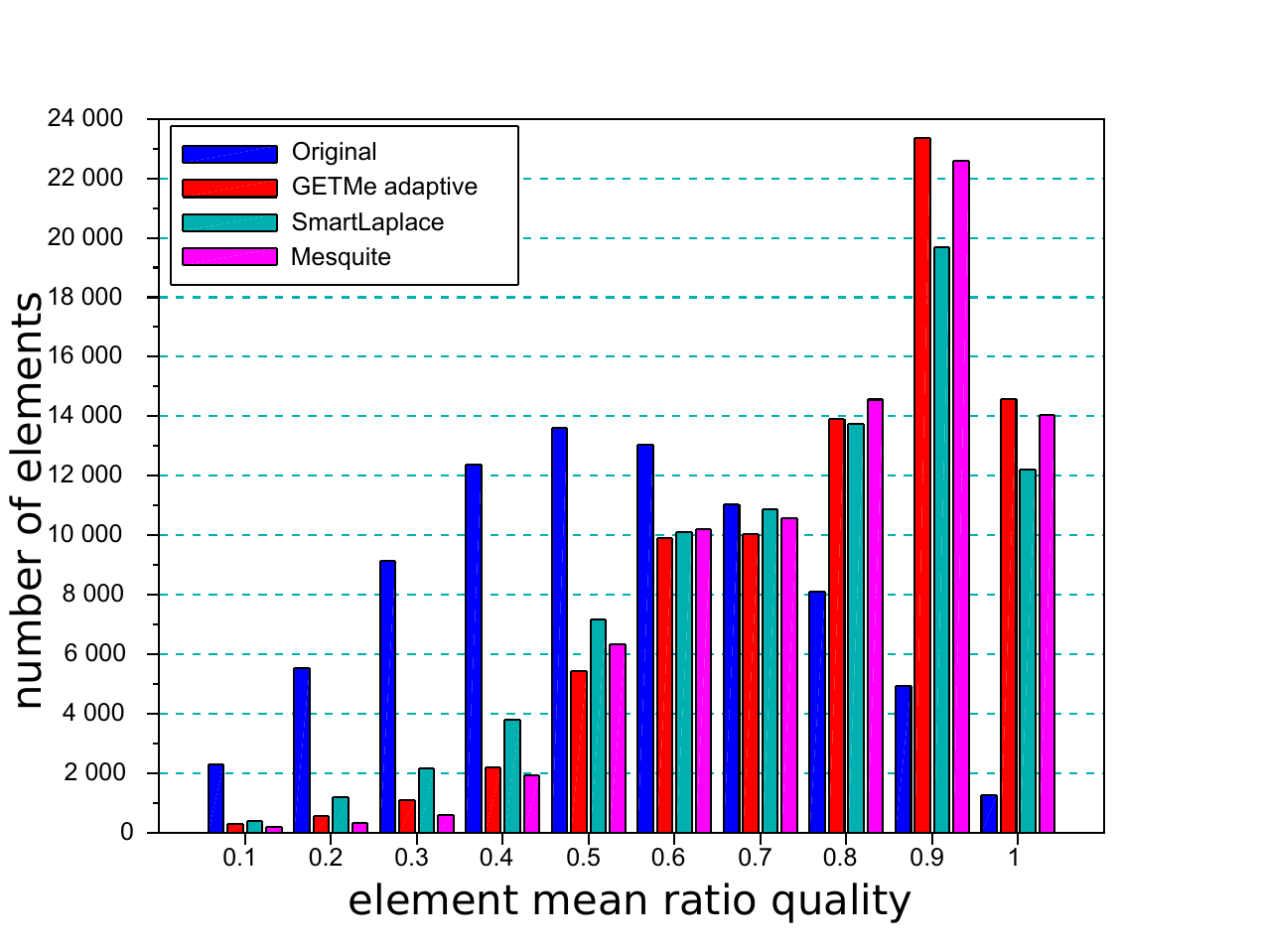}
\end{minipage}
\begin{minipage}{0.47\textwidth}
\includegraphics[width=1.2\textwidth]{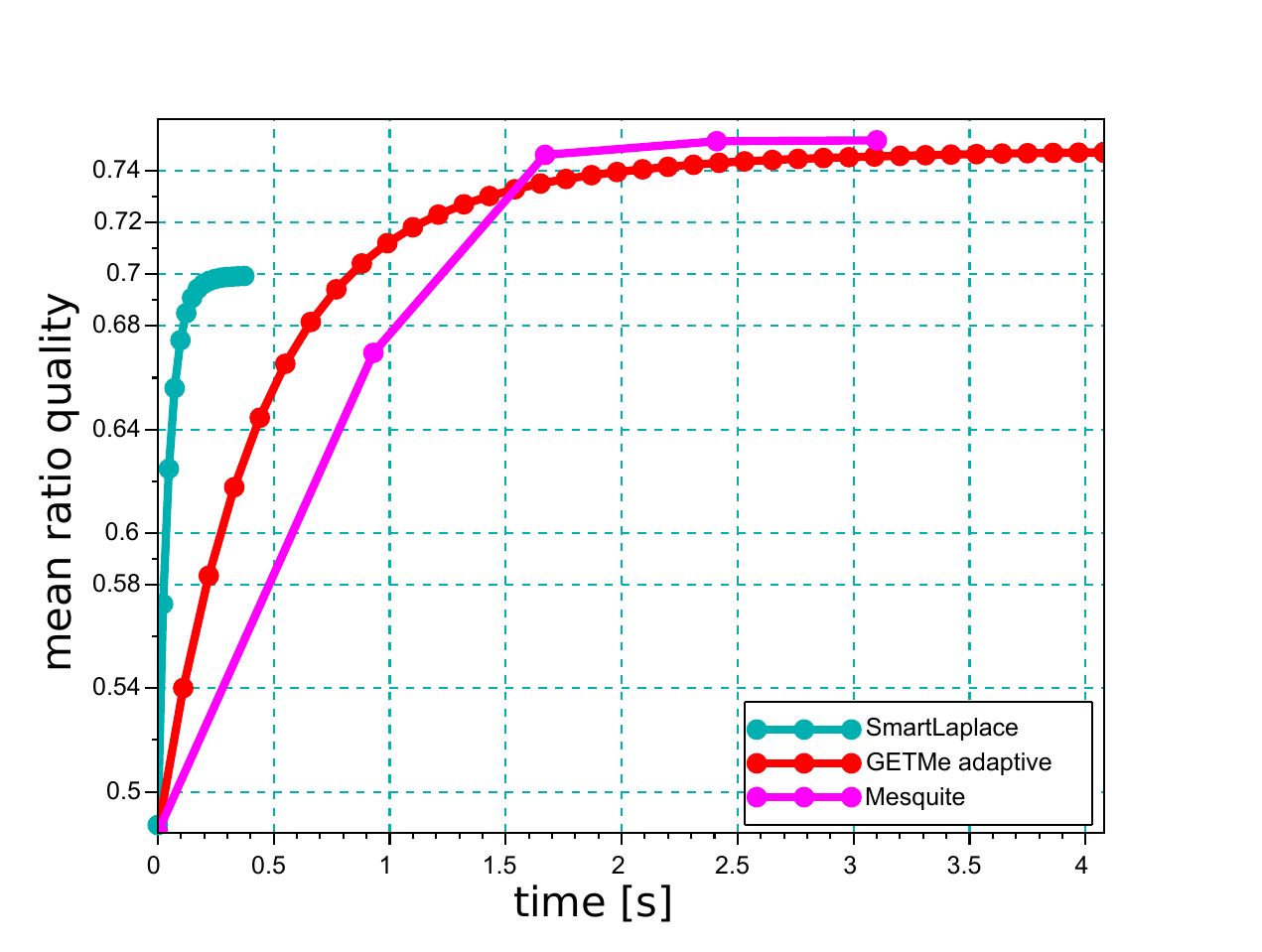}
\end{minipage}
\caption{Histogram of mesh quality improvement for the tetrahedral mesh of Fig.~\ref{fig:prt4}. The minimal element quality obtained with Laplace is $\min q_T = 9.5*10^{-5}$ and with Mesquite $\min q_T=0.06$.}
\label{fig:vergleich}
\end{figure}

\subsubsection{Example 2: hexahedral mesh}
As hexahedral sample mesh (see Fig.~\ref{fig:unittest_hexahedra}) we choose a wheel bearing model which was generated by a sweep approach applied to small sub-parts of the mesh (see \cite{KnuppSweep1998}) and then meshed by 67055 hexahedral elements. The hexahedral mesh smoothing algorithm~\ref{alg:hexmesh} was applied to its dual octahedral mesh as described in \ref{s.hexmesh}. The quality assessment displayed in Fig.~\ref{fig:histogram_hexaeder} shows that our algorithm delivers a result between the ones obtained by Laplace and Mesquite.
\begin{figure}[htbp]
\centering
\includegraphics[width=\textwidth]{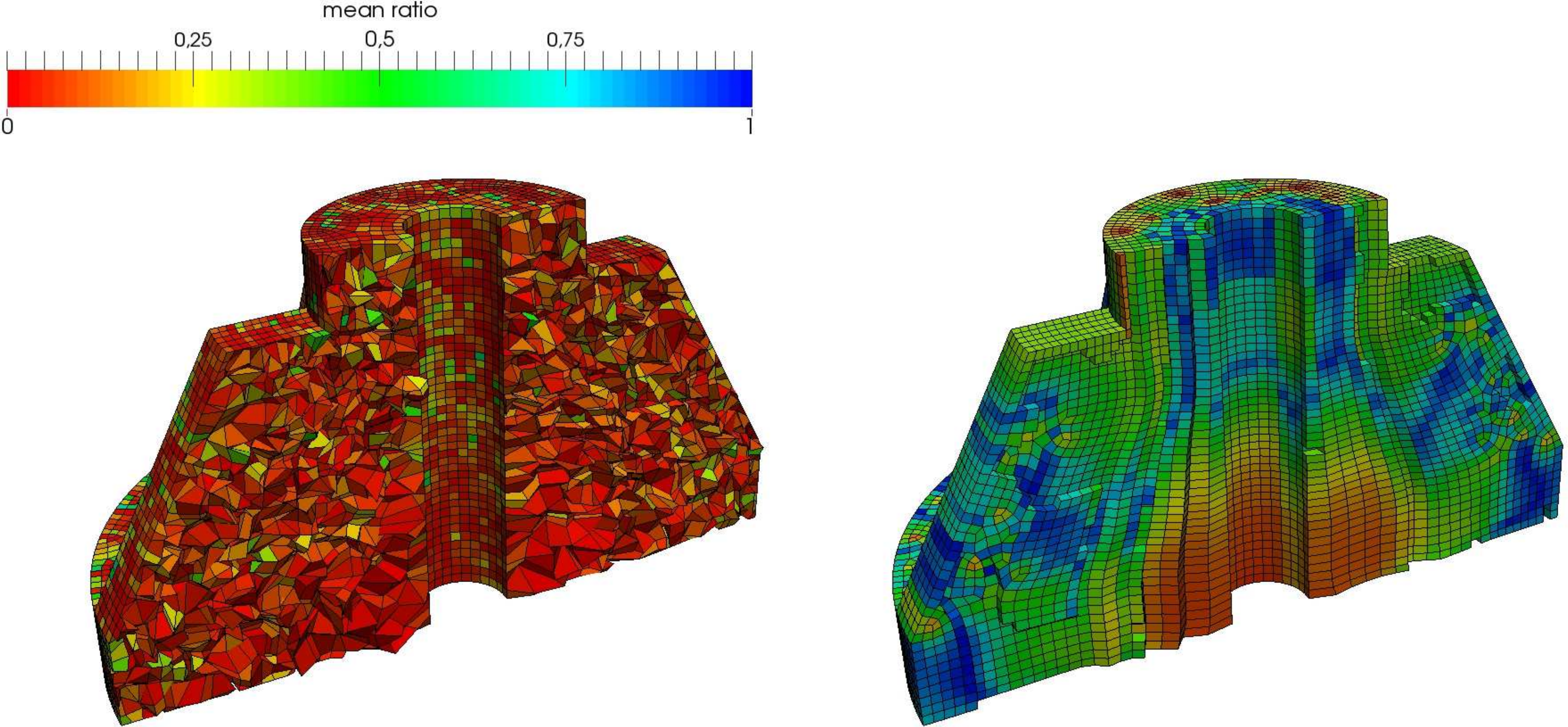}
\caption{Cross section of wheel bearing model of 67055 hexahedral elements with initial mean quality $q_V=0.46$ (geometric mean) and minimal element quality $\min q_H = 0.061$ improving to $q_V=0.92$ (geometric mean) and minimal element quality $\min q_H = 0.244$ with adaptive parameters $(\alpha_0,\alpha_1)=(0.7,0.7)$. For comparison, without the use of adaptive parameter we have reached a mean quality of $q_V=0.90$ (geometric mean) with minimal element quality $\min q_H = 0.102$.}
\label{fig:unittest_hexahedra}
\end{figure}
%
%
 
\begin{figure}[htbp]
\begin{minipage}{0.47\textwidth}
\includegraphics[width=1.2\textwidth]{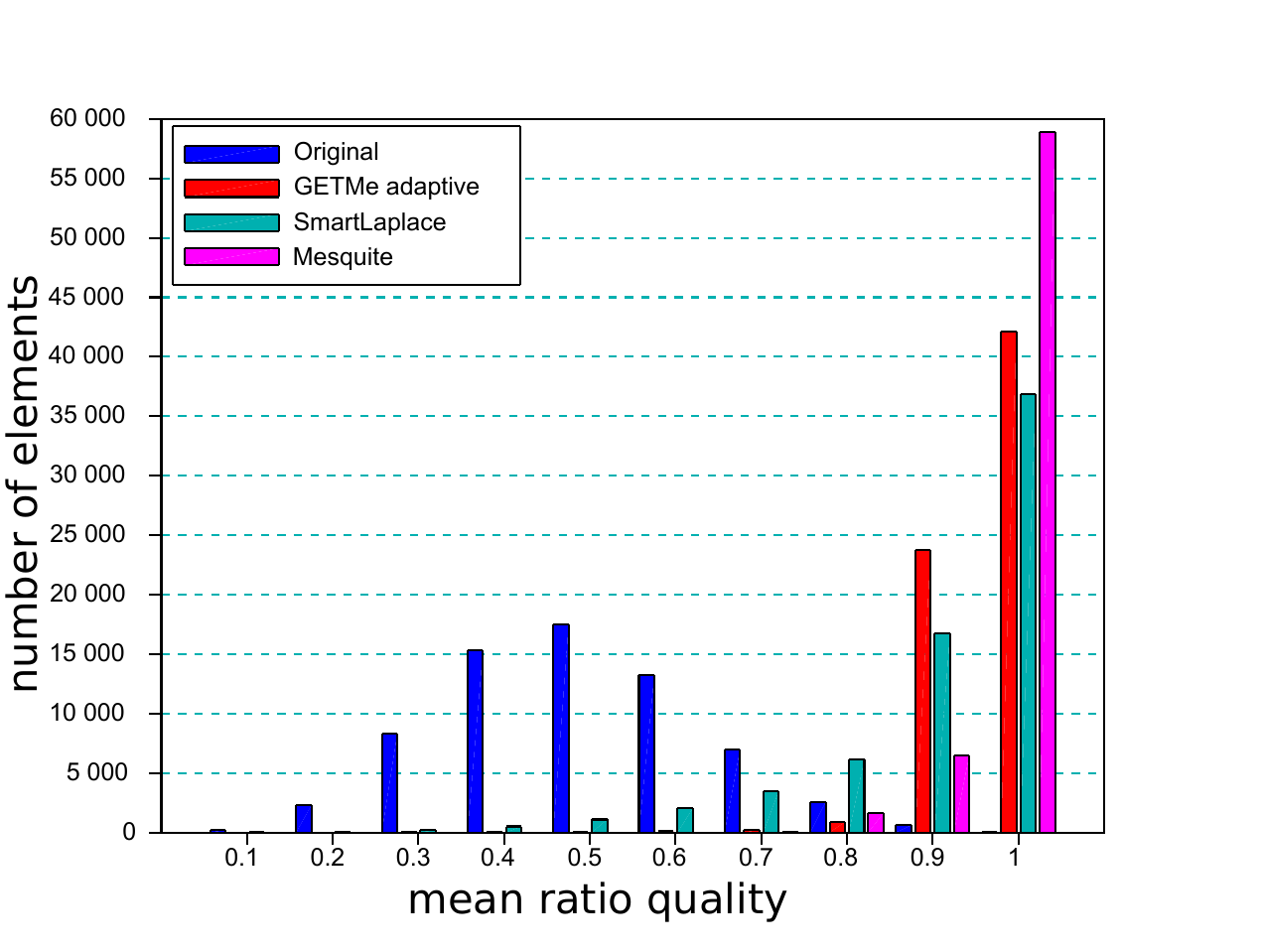}
\end{minipage}
\begin{minipage}{0.47\textwidth}
\includegraphics[width=1.2\textwidth]{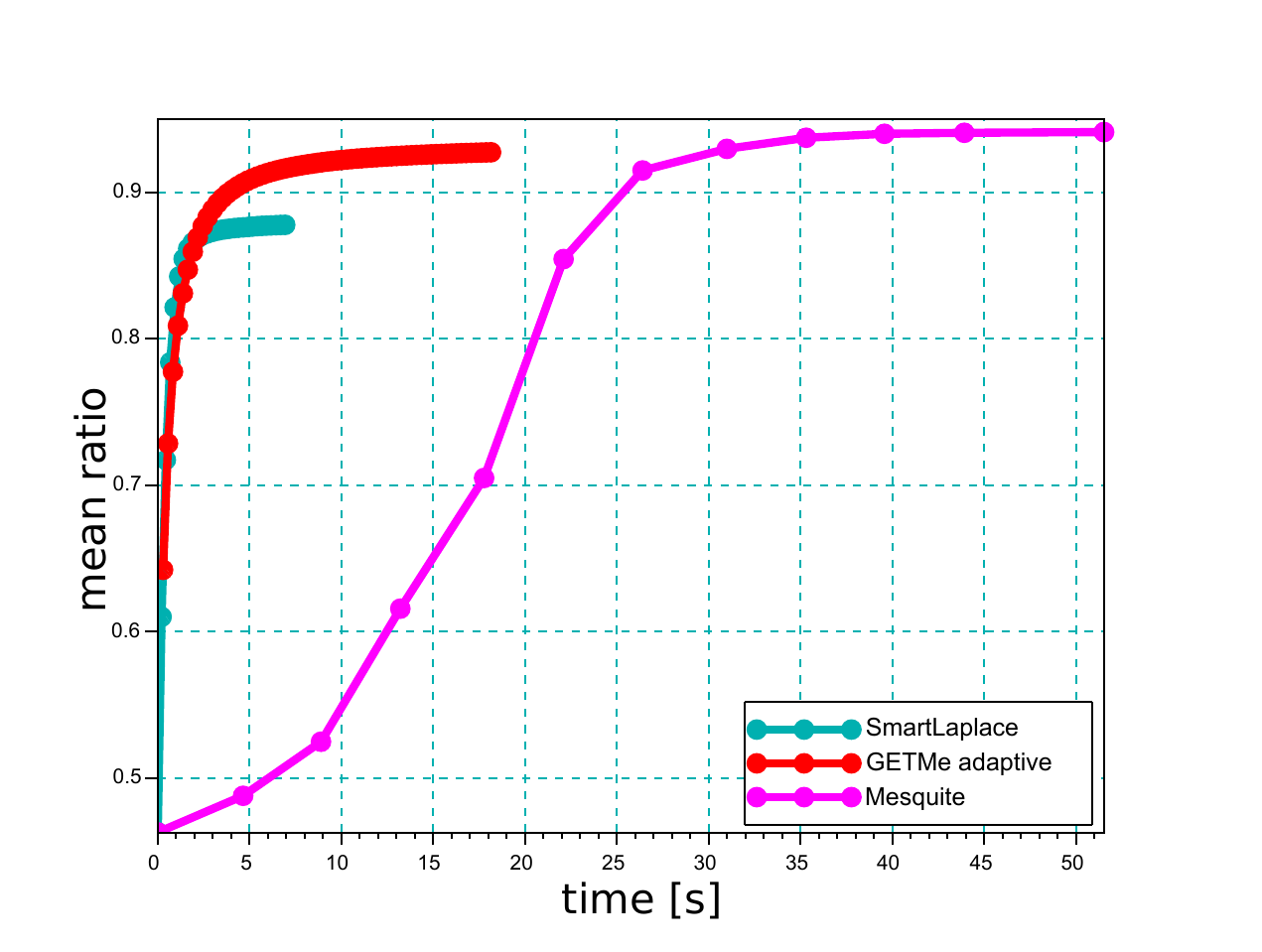}
\end{minipage}
\caption{Comparative histogram of the smoothing results by Mesquite and by SmartLaplace for the hexahedral mesh of Fig.~\ref{fig:unittest_hexahedra} on the left and comparison of mesh quality improvement of the global mesh. The minimal element quality for Laplace is $\min q_H = 0.069$ and for Mesquite $\min q_H = 0.612$. }
\label{fig:histogram_hexaeder}
\end{figure}
\begin{rem}
The figures of the three-dimensional meshes are created by ParaView 4.0.1 using the mesh quality filter. The color mapping is the quality measure called \emph{Shape} which is based on the Shape measure by Knupp for each respective volume element pre-implemented in ParaView (see \cite{VTK} where the exact definitions of the available mesh quality measures are given). We have always used the same with the maximal limits from 0 to 1.
\end{rem}  
%

%
\FloatBarrier
\section{Concluding Remarks}
The presented geometric triangle transformation exhibits interesting mathematical properties and is, at a first glance, appropriate for a broad usage as base of a mesh smoothing algorithm. We did not explore further the possibilities to implement the algorithm in the best and most efficient way possible as this article focuses on mathematical considerations such that the run time could be clearly reduced. Additionally, as an element-wise transformation it offers the possibility to be quite easily parallelizable. Further, one could implement a dynamical choice of the adaptive parameters depending on the distortion of every single element or of mesh regions to improve the performance of the smoothing algorithm. Although the tests with tetrahedral meshes have not yet been convincing, the much better results for planar triangle meshes and the hexahedral mesh make hope that one can find an intelligent way to control the rate of the element-wise convergence of the algorithm in every step such that the overall result improves considerably.\\ Another important aspect of future practical application will be to identify for which smoothing problems this algorithm is the best appropriate and how it maybe combined with existing smoothing algorithms, especially, the one from the family of geometric element transformation methods. \\
 
\bibliographystyle{plain} 
\bibliography{bibfile}

\appendix 
\section{Proof of Lemma~\ref{l:similar}}\label{ap:proof}
Let $\Delta = (x_0,x_1,x_2) \in \left(\mathbb{R}^2\right)^3$ and $\Delta'=(y_0,y_1,y_2) \in \left(\mathbb{R}^2\right)^3$. Assume that there exist an orthogonal matrix $A$, a vector $\sigma \in \mathbb{R}^2$ and a scalar $a \in \mathbb{R}$ such that $y_i = a(Ax_i +\sigma)$ for $i=0,1,2$. Let $c$ be the centroid of $\Delta$, we compute then the centroid of $\Delta'$ as $c' = a(Ac + \sigma)$. As transformation we use for simplicity $\theta$ given by \ref{e.firsttransformation}. We have then for $\theta(\Delta') = (y_0^{(1)},y_1^{(1)},y_2^{(1)})$ that for $i\in \mathbb{Z}$
\begin{align}\label{eq:proof_similar}
y_i^{(1)} &= r_i(y_i - c') + c' \nonumber\\
&=\left\|a(Ax_{i-1} +\sigma) - a(Ac+\sigma)\right\|\left\|a(Ax_i +\sigma) - a(Ac+\sigma)\right\|^{-1}(y_i - c') + c'\nonumber\\
&\;\mbox{norm is absolutely homogeneous}\nonumber\\
&= \left\|A(x_{i-1} - c)\right\|\left\|A(x_i-c)\right\|^{-1}aA(x_i -c) + a(Ac +\sigma) \; \nonumber\\
&\mbox{orthogonal matrices preserve vector lengths}\nonumber\\
&= a A \left(\left\|x_{i-1}-c\right\|\left\|x_i -c\right\|^{-1} (x_i -c) + c\right) + a\sigma \nonumber\\
& = a (A x_i^{(1)}+ \sigma).
\end{align} 
This proves $\theta(\Delta') = \theta(A\Delta + \sigma) = A\theta(\Delta) + \sigma$, i.e. the transformation $\theta$ commutes with isometries in the plane.  
Let us now consider $\theta$ given by Equation~\ref{e.transformation} by $y_i^{(1)} = r_i(y_i-c') + 2c' -c'_{new}$. One easily computes that it also computes with isometries using Equations~\ref{eq:proof_similar}:
\begin{align*}
y_i^{(1)} &= a(Ax_i^{(1)} + \sigma) + c' -c'_{new}\\
&=a(Ax_i^{(1)} + \sigma) + Ac+\sigma - Ac_{new} -\sigma\quad\mbox{using}\; \theta(\Delta') = A\theta(\Delta)+\sigma\\
&=a(A(x_i^{1}+c -c_{new})+\sigma).
\end{align*}
This finishes the proof.
\end{document}